\documentclass{amsart}
\usepackage{amssymb}
\usepackage{amsmath}
\usepackage{textcomp}
\hoffset-.4in%

\setlength{\textheight}{21cm}
\setlength{\textwidth}{5.6in}
\makeindex

\newcommand{\ba}{\begin{array}}
\newcommand{\eea}{\end{eqnarray}}
\newcommand{\ea}{\end{array}}

\newtheorem{definition}{Definition}[section]
\newtheorem{theorem}[definition]{Theorem}

\newtheorem{remark}[definition]{Remark}

\usepackage[matrix,arrow,curve]{xy}
\usepackage[utf8x]{inputenc}
\usepackage{tikz}

\begin{document}
\title[Poisson Structures On Closed Manifolds]{Poisson Structures On Closed Manifolds}
\author[S. Mukherjee]{Sauvik Mukherjee}
\address{Presidency University, Kolkata, India.\\
e-mail:mukherjeesauvik@yahoo.com\\}
\keywords{Poisson Structures,Symplectic Foliations,$h$-principle}

\begin{abstract} We prove an $h$-principle for poisson structures on closed manifolds. Equivalently we prove $h$-principle for symplectic foliation (singular) on closed manifolds. On open manifolds however the singularities could be avoided and it is a known result by Fernandes and Frejlich \cite{Fernandes}. \end{abstract}
\maketitle

\section{introduction} In this paper we prove an $h$-principle for poisson structures on closed manifolds. Similar results on open manifolds has been proved by Fernandes and Frejlich in \cite{Fernandes}. We state their result below.\\

 Let $M^{2n+q}$ be a $C^{\infty}$-manifold equipped with a co-dimension-$q$ foliation $\mathcal{F}_0$ and a $2$-form $\omega_0$ such that $(\omega_0^n)_{\mid T\mathcal{F}_0}\neq 0$. Denote by $Fol_q(M)$ the space of co-dimension-$q$ foliations on $M$ identified as a subspace of $\Gamma(Gr_{2n}(M))$, where $Gr_{2n}(M)\stackrel{pr}{\to} M$ is the grassmann bundle, i.e, $pr^{-1}(x)=Gr_{2n}(T_xM)$ and $\Gamma(Gr_{2n}(M))$ is the space of sections of $Gr_{2n}(M)\stackrel{pr}{\to} M$ with compact open topology. Define \[\Delta_q(M)\subset Fol_q(M)\times \Omega^2(M)\] \[\Delta_q(M):=\{(\mathcal{F},\omega):\omega^n_{\mid T\mathcal{F}}\}\neq 0\] Obviously $(\mathcal{F}_0,\omega_0)\in \Delta_q(M)$. In this setting Fernandes and Frejlich has proved the following 

\begin{theorem}(\cite{Fernandes})
\label{Fernandes}
Let $M^{2n+q}$ be an open manifold with $(\mathcal{F}_0,\omega_0)\in \Delta_q(M)$ be given. Then there exists a homotopy $(\mathcal{F}_t,\omega_t)\in \Delta_q(M)$ such that $\omega_1$ is $d_{\mathcal{F}_1}$-closed (actually exact). 
\end{theorem}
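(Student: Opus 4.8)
The assertion is an $h$-principle, so the plan is to realize the space of symplectic foliations as the section space of a \emph{flexible} sheaf on $M$ and invoke Gromov's theorem that a microflexible, $\mathrm{Diff}$-invariant sheaf on an \emph{open} manifold is flexible. Let $\Phi$ be the sheaf on $M$ whose sections over an open $U$ are the pairs $(\mathcal F,\omega)$ with $\mathcal F$ a codimension-$q$ foliation of $U$ and $\omega\in\Omega^2(U)$ satisfying $d_{\mathcal F}(\omega_{\mid T\mathcal F})=0$ and $\omega^n_{\mid T\mathcal F}\neq 0$; thus $\Phi(U)$ is exactly $\Delta_q(U)$ cut down by leafwise closedness, and $\Phi$ is tautologically invariant under the pseudogroup of local diffeomorphisms of $M$. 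The associated \'etale space $\Phi^{\mathrm{et}}\to M$ has, over $x$, the homotopy type of the space of pairs $(P,\eta)$ with $P\in Gr_{2n}(T_xM)$ and $\eta\in\Lambda^2P^{*}$ nondegenerate: germs of codimension-$q$ foliations with prescribed tangent plane form a contractible space (Haefliger), and germs of leafwise symplectic forms with prescribed $1$-jet are contractible by the leafwise Darboux--Moser argument. Hence a section of $\Phi^{\mathrm{et}}$ is, up to weak homotopy equivalence, a plane field together with a leafwise nondegenerate $2$-form; since on an open manifold the inclusion of integrable plane fields into all $2n$-plane fields is a weak homotopy equivalence (the Haefliger--Thurston $h$-principle for open $M$), this section space is weakly equivalent to $\Delta_q(M)$. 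Once flexibility of $\Phi$ is established, the canonical map $\Phi(M)\to\Delta_q(M)$ is therefore a weak homotopy equivalence, in particular $\pi_0$-surjective, which produces a path in $\Delta_q(M)$ from $(\mathcal F_0,\omega_0)$ to a genuine symplectic foliation $(\mathcal F_1,\omega_1)$ --- exactly the asserted homotopy.

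It thus remains to prove that $\Phi$ is microflexible, and here I would run the standard open-manifold scheme. Since $M$ is open it carries a compact core $K$ with $\dim K<\dim M$ and an isotopy $g_t$ of $M$ with $g_1(M)$ inside an arbitrarily small neighbourhood $\mathrm{Op}(K)$; by $\mathrm{Diff}$-invariance it suffices to build the homotopy over $\mathrm{Op}(K)$ and transport it back by $g_1$. Over $\mathrm{Op}(K)$ one proceeds by induction over the cells (or handles) of $K$: on a single cell one straightens $\mathcal F$ by a foliated chart and brings $\omega_{\mid T\mathcal F}$ to the Darboux model on the leaves; the inductive extension across the next cell is precisely a local-extension (microflexibility) step. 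For the foliation coordinates this is Haefliger's microflexibility of the sheaf of codimension-$q$ foliations; for the leafwise form it is the leafwise analogue of the microflexibility of the sheaf of symplectic forms established by Gromov, obtained from a leafwise Poincar\'e lemma combined with Moser's stability argument carried out with uniform estimates while the plane field itself is being perturbed.

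The last point is the main obstacle. The condition $d_{\mathcal F}\omega=0$ is \emph{closed}, not open, so convex integration is unavailable and microflexibility must be checked by hand, with the additional subtlety that the leaf directions along which one integrates the leafwise primitives are themselves moving during the deformation; controlling the leafwise de Rham class of $\omega$ throughout is the delicate part. Once this leafwise microflexibility is in place, the remaining assertions are routine: the relative and parametric versions of Gromov's flexibility theorem yield the homotopy $(\mathcal F_t,\omega_t)\in\Delta_q(M)$, and exactness of $\omega_1$ is extracted by prescribing the leafwise cohomology class of $\omega_1$ to be $0$ --- legitimate because, after deforming, $\mathcal F_1$ may be assumed defined near $K$ by a submersion onto an open subset of $\R^q$, so that leafwise primitives can be built into the cell-by-cell construction.
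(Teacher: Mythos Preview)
The paper does not give its own proof of this theorem: it is quoted verbatim from \cite{Fernandes} as background for the closed case treated in Section~3, so there is nothing in the paper to compare your argument against.

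On the substance of your plan, two remarks. First, your stalk computation is wrong: germs of codimension-$q$ foliations at a point with prescribed tangent plane do \emph{not} form a contractible space --- the homotopy fiber of $B\Gamma_q\to BGL_q(\R)$ is $B\overline{\Gamma}_q$, which is highly connected (Mather--Thurston) but not contractible. You patch this later by invoking Haefliger--Thurston for open $M$, but then the identification of $\Gamma(\Phi^{\mathrm{et}})$ with $\Delta_q(M)$ is no longer the tautology you present it as; it needs the Haefliger classifying-space machinery made explicit. Second, and more seriously, you correctly flag microflexibility of the sheaf ``foliation $+$ leafwise \emph{closed} nondegenerate $2$-form'' as the crux and then do not prove it. In fact this sheaf is not obviously microflexible: the closedness condition is not open, and the leafwise Moser argument you sketch does not by itself give microextension when the foliation moves. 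The standard way around this --- and the reason the statement says ``actually exact'' --- is to build exactness into the data from the outset: work with triples $(\mathcal F,\alpha,\omega)$, or simply $(\mathcal F,\alpha)$, where $\alpha$ is a leafwise $1$-form and the condition is $(d_{\mathcal F}\alpha)^n\neq 0$. That relation \emph{is} open and Diff-invariant, so Gromov's theorem for open manifolds applies directly once one handles the foliation via Haefliger; the formal data then visibly matches $\Delta_q(M)$ because the choice of primitive is contractible at the formal level. Your outline has the right shape but routes through the hardest formulation of the problem rather than the tractable one.
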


In the language of poisson geometry the above result \ref{Fernandes} takes the following form. Let $\pi \in \Gamma(\wedge ^2TM)$ be a bi-vectorfield on $M$, define $\#\pi:T^*M\to TM$ as $\#\pi(\eta)=\pi(\eta,-)$. If $Im(\#\pi)$ is a regular distribution then $\pi$ is called a regular bi-vectorfield.

 \begin{theorem}
 \label{Fernandes-1}
 Let $M^{2n+q}$ be an open manifold with a regular bi-vectorfield $\pi_0$ on it such that $Im(\#\pi)$ is an integrable distribution then $\pi_0$ can be homotoped through such bi-vectorfields to a poisson bi-vectorfield $\pi_1$.
 \end{theorem}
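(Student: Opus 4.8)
The plan is to translate the hypothesis into the foliated language of Theorem~\ref{Fernandes}, apply that theorem, and carry the resulting homotopy back to bi-vector fields.

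First I would record the dictionary between regular bi-vector fields with integrable characteristic distribution and elements of $\Delta_q(M)$. If $\pi$ is a regular bi-vector field on $M^{2n+q}$ with $D:=Im(\#\pi)$ integrable, then $D$ has constant even rank, which (matching the notation $M^{2n+q}$) I call $2n$, and $D=T\mathcal{F}$ for the codimension-$q$ foliation $\mathcal{F}$ tangent to $D$; since $\#\pi(\eta)\in D$ for every covector $\eta$ one has in fact $\pi\in\Gamma(\wedge^2 T\mathcal{F})$, and $\pi$ is non-degenerate on each leaf. Leafwise inversion of $\pi$ then gives a leafwise $2$-form $\omega_{\mathcal{F}}$ with $\omega_{\mathcal{F}}^n\neq 0$; fixing a complement $E$ with $TM=T\mathcal{F}\oplus E$ and viewing $\omega_{\mathcal{F}}$ as a $2$-form on $M$ with $E$ in its kernel produces $\omega\in\Omega^2(M)$ satisfying $(\omega^n)_{\mid T\mathcal{F}}=\omega_{\mathcal{F}}^n\neq 0$, i.e. $(\mathcal{F},\omega)\in\Delta_q(M)$. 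Conversely, any $(\mathcal{F},\omega)\in\Delta_q(M)$ yields a regular bi-vector field $\pi^{(\mathcal{F},\omega)}\in\Gamma(\wedge^2 T\mathcal{F})$, the leafwise inverse of $\omega_{\mid T\mathcal{F}}$ (well defined and smooth precisely because $\omega_{\mid T\mathcal{F}}$ is leafwise non-degenerate), with $Im(\#\pi^{(\mathcal{F},\omega)})=T\mathcal{F}$ integrable. Since leafwise inversion is an involution and the $\omega$ built from a given $\pi$ restricts on $T\mathcal{F}$ to $\omega_{\mathcal{F}}$, we recover $\pi^{(\mathcal{F},\omega)}=\pi$.

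The second ingredient is the classical equivalence: for a regular bi-vector field $\pi$ with integrable $Im(\#\pi)=T\mathcal{F}$, the Schouten bracket $[\pi,\pi]$ vanishes if and only if the leafwise form $\omega_{\mathcal{F}}$ is $d_{\mathcal{F}}$-closed. One direction is the fact that the leaves of a regular Poisson manifold are symplectic, with leafwise symplectic form $\omega_{\mathcal{F}}$; for the converse one works in a foliation chart $\R^{2n}\times\R^q$ and checks that $[\pi,\pi]$ has no component in the transverse directions and that its leafwise part is the leafwise Schouten bracket of $\pi$, which vanishes exactly when each leaf carries a symplectic form. I would cite this from a standard reference on Poisson geometry rather than reprove it.

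Granting these, the argument is immediate: starting from $\pi_0$, build $(\mathcal{F}_0,\omega_0)\in\Delta_q(M)$ as above, apply Theorem~\ref{Fernandes} to get a homotopy $(\mathcal{F}_t,\omega_t)\in\Delta_q(M)$ with $\omega_1$ being $d_{\mathcal{F}_1}$-closed, and set $\pi_t:=\pi^{(\mathcal{F}_t,\omega_t)}$. Because $(\mathcal{F}_t,\omega_t)$ varies continuously and $(\omega_t)_{\mid T\mathcal{F}_t}$ stays leafwise non-degenerate, $t\mapsto\pi_t$ is a homotopy through regular bi-vector fields, each with $Im(\#\pi_t)=T\mathcal{F}_t$ integrable; $\pi_0$ is the original bi-vector field; and $\pi_1$ is Poisson since $\omega_1$ is $d_{\mathcal{F}_1}$-closed. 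That is the desired homotopy. The only genuinely non-formal points are the two facts just cited --- the equivalence ``leafwise closed $\Leftrightarrow$ Poisson'' and the continuous dependence of the leafwise inverse as the tangent distribution $T\mathcal{F}_t$ itself moves --- so I expect the main obstacle to be presentational: pinning the dictionary down carefully enough that the step ``now apply Theorem~\ref{Fernandes}'' really closes the argument.
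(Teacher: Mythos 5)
Your proposal is correct and is exactly the argument the paper intends: Theorem \ref{Fernandes-1} is stated as the Poisson-geometric reformulation of Theorem \ref{Fernandes}, with no separate proof given, the content being precisely the dictionary you spell out (leafwise inversion between regular bi-vector fields with integrable image and pairs $(\mathcal{F},\omega)\in\Delta_q(M)$, plus the standard equivalence between $d_{\mathcal{F}}$-closedness of the leafwise form and vanishing of the Schouten bracket). Your write-up is in fact more careful than the paper's, which simply asserts the translation.
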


In \ref{Fernandes} above $d_{\mathcal{F}}$ is the tangential exterior derivative, i.e, for $\eta \in \Gamma(\wedge^k T^*\mathcal{F})$, $d_{\mathcal{F}}\eta$ is defined by the following formula \[d_{\mathcal{F}}\eta(X_0,X_1,...,X_k)=\Sigma_i(-1)^iX_i(\eta(X_0,..,\hat{X}_i,..,X_k))\]\[+\Sigma_{i<j}(-1)^{i+j}\eta([X_i,X_j],X_0,..,\hat{X}_i,..,\hat{X}_j,..,X_k)\]where $X_i\in \Gamma(T\mathcal{F})$. So if we extend a $\mathcal{F}$-leafwise closed $k$-form $\eta$, i.e, $d_{\mathcal{F}}\eta=0$, to a form $\eta'$ by the requirement that $ker(\eta')=\nu \mathcal{F}$, where $\nu \mathcal{F}$ is the normal bundle to $\mathcal{F}$, then $d\eta'=0$.\\ 

In order to fix the foliation in \ref{Fernandes} the foliated manifold $(M,\mathcal{F})$ must be uniformly open. Let us define this notion.

\begin{definition}(\cite{Bertelson})
\label{Uniform open}
A foliated manifold $(M,\mathcal{F})$ is called uniformly open if there exists a function $f:M\to [0,\infty)$ such that 
\begin{enumerate}
\item $f$ is proper,\\
\item $f$ has no leafwise local maxima,\\
\item $f$ is $\mathcal{F}$-generic.
\end{enumerate}
\end{definition} 

So let us explain the notion $\mathcal{F}$-generic. In order to do so we need to define the singularity set $\Sigma^{(i_1,i_2,...,i_k)}(f)$ for a map $f:M\to W$. $\Sigma^{i_1}(f)$ is the set \[\{p\in M:dim(ker(df)_p)=i_1\}\] It was proved by Thom \cite{Thom} that for most maps $\Sigma^{i_1}(f)$ is a submanifold of $M$. So we can restrict $f$ to $\Sigma^{i_1}(f)$ and construct $\Sigma^{(i_1,i_2)}(f)$ and so on. In \cite{Thom} it has been proved that there exists $\Sigma^{(i_1,...,i_k)}\subset J^k(M,W)$ such that $(j^kf)^{-1}\Sigma^{(i_1,...,i_k)}=\Sigma^{(i_1,...,i_k)}(f)$.\\

 Let us set $W=\mathbb{R}$ as this is the only situation we need. Let $(M,\mathcal{F})$ be a foliated manifold with a leaf $F$. Define the restriction map \[r_F:J^k(M,\mathbb{R})\to J^k(F,\mathbb{R}):j^kf(x)\mapsto j^k(f_{\mid F})(x)\] Define foliated analogue of the singularity set as \[\Sigma^{(i_1,i_2,...,i_k)}_{\mathcal{F}}:=\cup_{\{F\ leaf\ of\ \mathcal{F}\}} r_F^{-1}\Sigma^{(i_1,i_2,...,i_k)}\]

\begin{definition}(\cite{Bertelson})
A smooth real valued function $f:M\to \mathbb{R}$ is called $\mathcal{F}$-generic if the first jet $j^1f \pitchfork \Sigma^{(n)}_{\mathcal{F}}$ and the second jet $j^2f \pitchfork \Sigma^{(i_1,i_2)}_{\mathcal{F}}$ for all $(i_1,i_2)$.
\end{definition}

 We refer the reader to \cite{Bertelson} for more details. Under this hypothesis Bertelson proved the following

\begin{theorem}(\cite{Bertelson})
If $M$ is open and $(M,\mathcal{F})$ be a uniformly open foliated manifold and let $\omega_0$ be a $\mathcal{F}$-leafwise non-degenerate $2$-form then $\omega_0$ can be homotoped through $\mathcal{F}$-leafwise non-degenerate $2$-forms to a $\mathcal{F}$-leafwise symplectic form. 
\end{theorem}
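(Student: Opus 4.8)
The plan is to obtain this as an instance of Gromov's $h$-principle for open differential relations invariant under foliated isotopies, in its sheaf-theoretic (flexibility) formulation; this is exactly the machinery Bertelson develops, and the ``openness condition'' on $\mathcal{F}$ plays the role that openness of the ambient manifold plays in Gromov's classical symplectic $h$-principle. First I would package a leafwise symplectic form as a solution of a first-order relation whose zeroth-order, open part is leafwise nondegeneracy, $(\omega^n)_{\mid T\mathcal{F}}\neq 0$, and whose first-order part imposes $d_{\mathcal{F}}\omega=0$. A formal solution of this relation is then simply a leafwise nondegenerate $2$-form (the symbol $d_{\mathcal{F}}\omega$ is prescribed to vanish but decoupled from any integrability constraint), so $\omega_0$ itself represents a formal solution, and the $h$-principle we are after is the assertion that the sheaf of genuine solutions (leafwise symplectic forms) includes into the sheaf of formal solutions (leafwise nondegenerate $2$-forms) as a weak homotopy equivalence; its surjectivity on $\pi_0$ is precisely the desired homotopy of $\omega_0$, through leafwise nondegenerate $2$-forms, to a leafwise symplectic form.

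I would then check the two abstract inputs of the flexibility engine. First, \emph{microflexibility}: the sheaf of leafwise symplectic forms is microflexible, because leafwise closedness defines a microflexible sheaf --- via a leafwise Poincar\'e lemma, i.e.\ a leafwise homotopy operator $K$ with $d_{\mathcal{F}}K+Kd_{\mathcal{F}}=\mathrm{id}$ on leafwise forms of positive degree over foliated charts, which lets local solutions be extended near a compact set after shrinking the parameter --- while leafwise nondegeneracy is an open pointwise condition. Second, \emph{invariance under foliated isotopies}: the relation is natural under leaf-preserving diffeomorphisms and the pullback of a leafwise symplectic form by a foliated isotopy is again leafwise symplectic, so the sheaf is invariant in the required sense.

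The engine then reads: a sheaf that is microflexible and invariant under foliated isotopies over a foliated manifold $(M,\mathcal{F})$ \emph{whose foliation satisfies the openness condition} is flexible, i.e.\ satisfies the $h$-principle. The openness condition is consumed exactly where openness of $M$ is used in Gromov's argument: it furnishes, for every compact set, an ambient foliated isotopy pushing a neighbourhood of a leafwise skeleton of positive codimension off itself inside the leaves (``sharp movability''/engulfing), and this is what upgrades microflexibility to a genuine extension step. Running this inductively over the cells of a triangulation, or the handles of a handle decomposition, of $(M,\mathcal{F})$ then deforms $\omega_0$ to a leafwise symplectic form, and since the argument is relative it can be done rel any closed set on which $\omega_0$ is already leafwise closed.

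The crux --- and the only place the precise hypothesis on $\mathcal{F}$ is essential --- is deriving the foliated sharp-movability/engulfing property from the openness condition and carrying out the inductive gluing so that it stays compatible with the \emph{global} foliated structure rather than merely working leaf by leaf; this is the technical heart of Bertelson's paper. One should also note that, although the statement carries no cohomological hypothesis, each extension step implicitly selects leafwise de Rham data, so one must use the openness condition to rule out a compact leaf on which a leafwise symplectic form would be obstructed by $[\omega^n]\neq 0$, ensuring these choices are always available.
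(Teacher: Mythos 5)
The paper does not prove this statement: it is quoted from \cite{Bertelson} as background, so there is no proof of record to compare yours against, and I can only assess your blueprint on its own terms. Your overall strategy --- run the foliated analogue of Gromov's $h$-principle for invariant relations on open manifolds, with the openness condition on $\mathcal{F}$ substituting for openness of $M$ in the engulfing/compression step --- is indeed how Bertelson's theorem is proved, and your identification of where the openness hypothesis is consumed (sharp movability of leafwise skeleta of positive codimension) is correct.

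There is, however, a genuine miscue in how you set up the relation. The condition ``$(\omega^n)_{\mid T\mathcal{F}}\neq 0$ and $d_{\mathcal{F}}\omega=0$'' on leafwise $2$-forms is \emph{not} an open subset of the $1$-jet space (leafwise closedness cuts out a positive-codimension locus in the jet fiber), so it is not covered by the machinery of \cite{Bertelson}, whose theorem is precisely about \emph{open} relations invariant under foliated isotopies. The fix that matches the cited result is to pass to leafwise primitives: consider the relation on leafwise $1$-forms $\alpha$ given by $\bigl((d_{\mathcal{F}}\alpha)^n\bigr)_{\mid T\mathcal{F}}\neq 0$. This is an open, first-order, foliated-isotopy-invariant relation; $\omega_0$ supplies a formal solution (prescribe the antisymmetrized derivative of $\alpha$ to be $\omega_0$); and the output is a leafwise \emph{exact} symplectic form $\omega_1=d_{\mathcal{F}}\alpha_1$, consistent with the parenthetical ``actually exact'' in the Fernandes--Frejlich statement quoted just above in the paper. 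Your alternative route through microflexible sheaves can also be made to work in principle, but then you must actually establish foliated microflexibility of the sheaf of leafwise closed nondegenerate forms and a foliated version of Gromov's flexibility theorem for microflexible invariant sheaves, which is not what \cite{Bertelson} provides; as written your proposal hedges between the two formulations without committing to either. Finally, the closing remark about separately ruling out a compact leaf via its leafwise cohomology is not an additional step: that obstruction is exactly the phenomenon the openness condition excludes, and it is already absorbed in the engulfing step rather than standing as an extra hypothesis to verify.
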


She also constructed counter examples in \cite{Bertelson1} that without these conditions the above theorem fails. A contact analogue of Bertelson's result on any manifold (open or closed) has recently been proved in \cite{Over-twisted} by Borman, Eliashberg and Murphy. We will use this theorem in our argument. So let us state the theorem.

\begin{theorem}(\cite{Over-twisted})
\label{Contact}
Let $M^{2n+q+1}$ be any manifold equipped with a co-dimension-$q$ foliation $\mathcal{F}$ on it and let $(\alpha_0,\beta_0)\in \Gamma(T^*\mathcal{F}\oplus \wedge^2T^*\mathcal{F})$ be given such that $\alpha_0\wedge \beta_0^n$ is nowhere vanishing, then there exists a homotopy $(\alpha_t,\beta_t)\in \Gamma(T^*\mathcal{F}\oplus \wedge^2T^*\mathcal{F})$ such that $\alpha_t\wedge \beta_t^n$ is nowhere vanishing and $\beta_1=d_{\mathcal{F}}\alpha_1$.
\end{theorem}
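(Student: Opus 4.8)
\medskip\noindent
The plan is to read the statement as the leafwise version of the $h$-principle for contact structures and to reduce it to the parametric, relative $h$-principle for overtwisted contact structures of Borman, Eliashberg and Murphy, applied chart by chart and globalized by an induction over a finite cover of $M$. A pair $(\alpha,\beta)\in\Gamma(T^*\mathcal F\oplus\wedge^2T^*\mathcal F)$ with $\alpha\wedge\beta^n$ nowhere zero is precisely a \emph{formal} leafwise contact structure, the genuine ones being those with $\beta=d_{\mathcal F}\alpha$; so we must homotope $(\alpha_0,\beta_0)$, through formal leafwise contact structures, to a genuine one. First I would cover $M$ by finitely many foliation boxes $U_1,\dots,U_N$, with $U_i\cong B^{2n+1}\times B^q$ identifying $\mathcal F|_{U_i}$ with the product foliation whose leaves are $B^{2n+1}\times\{t\}$, and fix a shrinking $V_i\Subset U_i$ still covering $M$. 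Over each $U_i$ the pair $(\alpha_0,\beta_0)$ becomes a $B^q$-family $\{(\alpha_0^t,\beta_0^t)\}_{t\in B^q}$ of formal contact structures on the ball $B^{2n+1}$, and the task over $U_i$ is to homotope such a family to a family of genuine contact forms.

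The engine of the argument is the Borman--Eliashberg--Murphy $h$-principle in its parametric and relative form: for a manifold $W$, a closed subset $A\subset W$, a compact parameter manifold $P$ with closed subset $P_0\subset P$, every $P$-family of formal contact structures on $W$ that is genuine \emph{and overtwisted} on a neighbourhood of $(W\times P_0)\cup(A\times P)$ is homotopic, rel that neighbourhood, to a family of genuine overtwisted contact structures; moreover the homotopy can be taken supported in an arbitrarily small neighbourhood of the set where one actually needs to change the structure. Granting this, I would run the usual induction: supposing that after step $k$ the pair $(\alpha,\beta)$ has been homotoped, always within formal leafwise contact structures, so that $\beta=d_{\mathcal F}\alpha$ on a neighbourhood $\mathcal O_k$ of $V_1\cup\dots\cup V_k$, I would restrict to $U_{k+1}\cong B^{2n+1}\times B^q$, take $A$ to be the slicewise closure of $\mathcal O_k\cap U_{k+1}$, apply the quoted $h$-principle with $P=B^q$ and $P_0=\emptyset$ rel $A$ to obtain $\beta=d_{\mathcal F}\alpha$ on a neighbourhood of $V_{k+1}$ as well without disturbing $\mathcal O_k$, and transport the resulting homotopy back to $M$ using that it is supported away from $\partial U_{k+1}$ and that $\alpha\wedge\beta^n\neq0$ is an open condition. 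After $N$ steps $\beta_1=d_{\mathcal F}\alpha_1$ on all of $M$.

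The point that is not just bookkeeping --- and which I expect to be the main obstacle --- is the overtwistedness needed to make the quoted $h$-principle applicable, since on a closed manifold (and leaves of $\mathcal F$ may be non-compact even when $M$ is closed) contact structures only enjoy the $h$-principle in the overtwisted range. Before the induction one must first deform $(\alpha_0,\beta_0)$, through formal leafwise contact structures, so that inside a small box contained in $V_1$ it carries the standard $B^q$-family of leafwise overtwisted disks; this seeding is harmless \emph{locally}, but for the inductive step to apply one needs, at every stage $k+1$, that $A$ meets each leaf of $\mathcal F|_{U_{k+1}}$ in a region already containing a leafwise overtwisted disk. Because the leaves need not be closed, this forces a careful choice of the cover and of the order in which the boxes are treated, so that the seeded genuine-and-overtwisted region contains a full transversal of $\mathcal F$ and hence meets every leaf before that leaf is first modified; moreover this must be done uniformly across the $B^q$-family of leaf-slices in each box, so the interaction between the transverse parameter and the leafwise overtwistedness is the delicate part. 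Once that propagation is arranged, the remainder is the standard finite induction over foliation boxes carrying the parametric relative $h$-principle.
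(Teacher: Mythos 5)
The paper does not prove this statement at all: Theorem \ref{Contact} is imported verbatim from Borman--Eliashberg--Murphy \cite{Over-twisted} and used as a black box in Section 3. So there is no proof in the paper to compare yours against; what you have written is an attempt to supply the reduction of the foliated statement to the BEM $h$-principle, which is a legitimate thing to want but is not what the author does.

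Taken on its own terms, your outline is the natural one (foliation boxes, parametric relative $h$-principle over $B^q$, induction over a finite cover), but the step you yourself flag as ``the main obstacle'' is a genuine gap, not bookkeeping, and the mechanism you propose to close it does not work as stated. You need, at stage $k+1$, that the already-genuine-and-overtwisted set $A$ meets \emph{every} leaf-slice $B^{2n+1}\times\{t\}$, $t\in B^q$, of the box $U_{k+1}$ in a region containing a leafwise overtwisted disc --- but $A=\overline{\mathcal O_k\cap U_{k+1}}$ may miss many slices entirely, and your proposed fix (choosing the seeded region to ``contain a full transversal of $\mathcal F$ and hence meet every leaf'') is false in general: a compact transversal sitting inside one foliation box need not meet every leaf of a foliation on a closed manifold (already the Reeb foliation of $S^3$ defeats this), and even when it meets a leaf it need not meet the particular plaque of that leaf lying in $U_{k+1}$. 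Propagating overtwistedness coherently in the transverse parameter is exactly the content of the leafwise/parametric refinement of \cite{Over-twisted}, and without an argument for it your induction cannot start its $(k+1)$-st step. A secondary issue: you invoke the parametric relative BEM theorem with $P=B^q$, $P_0=\emptyset$, but contractibility of the parameter space does not exempt you from the overtwistedness hypothesis in the relative parametric statement, so you cannot quietly fall back on the non-parametric existence theorem either. If you want a self-contained proof, you must either quote the foliated version directly (as the paper does) or give an actual construction of a leafwise overtwisted basis adapted to the cover; as it stands the proposal records the difficulty rather than resolving it.
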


\begin{definition}
\label{Homotopy of singular foliation}
By a homotopy of singular foliation $\mathcal{F}_t,\ t\in I$ on a manifold $M$ we mean a regular foliation  $\mathcal{F}$ on $M\times I$. The singular locus $\Sigma_t$ of $\mathcal{F}_t$ is given by \[\Sigma_t=\{(x,t)\in M\times\{t\}:\mathcal{F}_{(x,t)}\ not\ \pitchfork\ to\ M\times \{t\}\}\]   
\end{definition}

Now we state the main theorem of this paper. 

\begin{theorem}
\label{Main}
Let $M^{2n+q}$ be a closed manifold with $q=2$ and $(\mathcal{F}_0,\omega_0)\in \Delta_q(M)$ be given. Then there exists a homotopy $\mathcal{F}_t$ of singular foliations on $M$ with singular locus $\Sigma_t$ and a homotopy of two forms $\omega_t$ such that the restriction of $(\omega_t)$ to $T\mathcal{F}_t$ is non-degenerate and $\omega_1$ is $d_{\mathcal{F}_1}$-closed, i.e, $(\mathcal{F}_1,\omega_1)$ is a symplectic foliation (singular).
\end{theorem}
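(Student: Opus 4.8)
The plan is to reduce the closed-manifold problem to the contact-type $h$-principle of Borman--Eliashberg--Murphy (Theorem \ref{Contact}) by trading the missing ``openness'' of $M$ for the freedom to introduce a controlled singular locus $\Sigma$. The starting data $(\mathcal{F}_0,\omega_0)\in\Delta_2(M)$ gives a codimension-$2$ foliation together with a $2$-form that is leafwise symplectic. Since $q=2$, a tubular/Morse-theoretic argument lets me choose a codimension-$1$ foliation $\widehat{\mathcal{F}}$ refining a slight thickening of $\mathcal{F}_0$ away from a codimension-$\geq 1$ set, so that $T\mathcal{F}_0\subset T\widehat{\mathcal{F}}$ and $T\widehat{\mathcal{F}}/T\mathcal{F}_0$ is a line field; concretely one wants to write $M$, near the eventual singular locus, as carrying an auxiliary ``extra'' direction along which the foliation can degenerate. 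On the complement of a neighborhood of $\Sigma_0$ (to be specified) the pair $(\mathcal{F}_0,\omega_0)$ already has the right form, and the point of passing to the odd-dimensional leaves of $\widehat{\mathcal{F}}$ is that there the relevant structure to homotope is precisely a leafwise pair $(\alpha,\beta)\in\Gamma(T^*\widehat{\mathcal{F}}\oplus\wedge^2 T^*\widehat{\mathcal{F}})$ with $\alpha\wedge\beta^n\neq 0$, i.e.\ exactly the input of Theorem \ref{Contact}.

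Next I would run Theorem \ref{Contact} on $(M,\widehat{\mathcal{F}})$ with initial datum $(\alpha_0,\beta_0)$ built from $\omega_0$ (take $\beta_0=\omega_0|_{T\widehat{\mathcal{F}}}$ after cor0recting ranks, and $\alpha_0$ a leafwise $1$-form transverse to $\ker\beta_0$ inside $T\widehat{\mathcal{F}}$, chosen so that $\alpha_0\wedge\beta_0^n\neq 0$; here the $C^0$-smallness of the chosen thickening keeps this nonvanishing). This produces a homotopy $(\alpha_t,\beta_t)$ through leafwise almost-contact data ending at $\beta_1=d_{\widehat{\mathcal{F}}}\alpha_1$, so that $\alpha_1$ is a leafwise contact form on the odd-dimensional leaves of $\widehat{\mathcal{F}}$. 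Now I reconstruct the sought family on $M$: along the way the distribution $\ker\beta_t\subset T\widehat{\mathcal{F}}$ is a rank-$2n$ field which need no longer be integrable, so I let $\mathcal{F}_t$ be the \emph{singular} foliation generated by it, with singular locus $\Sigma_t$ the (closed, positive-codimension) set where integrability fails, and put $\omega_t:=\beta_t$ on $T\mathcal{F}_t$. By construction $\omega_t|_{T\mathcal{F}_t}$ is nondegenerate (it is $\beta_t$ restricted to a complement of $\ker\beta_t\cap\ker$, of even rank $2n$, where $\beta_t^n\neq 0$). At $t=1$, because $\beta_1=d_{\widehat{\mathcal{F}}}\alpha_1$ and $\ker\omega_1=\nu\mathcal{F}_1$ in the leaves of $\widehat{\mathcal{F}}$, the extension-across-the-normal-bundle remark in the text (``if $d_{\mathcal{F}}\eta=0$ and $\ker\eta'=\nu\mathcal{F}$ then $d\eta'=0$'') upgrades leafwise closedness to genuine closedness: $d\omega_1=0$ on all of $M$, including across $\Sigma_1$ by continuity since $\Sigma_1$ has empty interior.

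The main obstacle I expect is \emph{controlling the singular locus}: I must guarantee that the non-integrability set $\Sigma_t$ of $\ker\beta_t$ is genuinely a proper ``singular locus'' (closed, nowhere dense, with the leafwise-symplectic structure extending continuously across it and $\omega_1$ still $d$-closed there), rather than spreading to an open set. This is where the hypothesis $q=2$ is doing real work: the one extra codimension is exactly enough room to house a generic, stratified $\Sigma$, and one should check via a transversality/jet-genericity argument applied to the integrability tensor $[\,\cdot\,,\cdot\,]\bmod\ker\beta_t$ that $\Sigma_t$ can be taken of positive codimension throughout a generic homotopy. A secondary technical point is the compatibility of the refinement $\widehat{\mathcal{F}}$ with the closedness of $M$ — unlike the open case one cannot exhaust $M$ by a Morse-free region — which is precisely why the construction must permit singularities instead of fixing a smooth foliation; I would handle this by doing the contact $h$-principle globally (which Theorem \ref{Contact} allows on any manifold) and only afterward reading off $(\mathcal{F}_t,\Sigma_t)$.
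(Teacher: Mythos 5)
Your proposal correctly identifies the key external input (Theorem \ref{Contact} applied to a foliation with odd-dimensional leaves), but the two steps surrounding it have genuine gaps. First, the reduction: you ask for a codimension-one foliation $\widehat{\mathcal{F}}$ on the \emph{closed} manifold $M$ with $T\mathcal{F}_0\subset T\widehat{\mathcal{F}}$. The existence of such a refinement is a nontrivial integrability problem (you must integrate a hyperplane field containing $T\mathcal{F}_0$), and nothing in your sketch produces it. The paper sidesteps this entirely by stabilizing: it works on $\tilde{M}=M\times\mathbb{R}$ with $\tilde{\mathcal{F}}=\mathcal{F}_0\times\mathbb{R}$, which has $(2n+1)$-dimensional leaves for free and still has codimension $q$, runs Theorem \ref{Contact} there, and then recovers structures on $M$ by immersing $M$ back into $\tilde{M}\times\mathbb{R}$.

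Second, and more seriously, the passage from the leafwise contact structure back to the conclusion is where the actual content of the theorem lies, and your mechanism for it does not work. A non-integrable distribution $\ker\beta_t$ does not ``generate a singular foliation with singular locus where integrability fails'': generically the integrability tensor of $\ker\beta_t$ is nonzero on an \emph{open dense} set, not on a nowhere-dense stratified set, so there is no foliation $\mathcal{F}_t$ to restrict $\omega_t$ to. The singularities in the statement are rank-drop singularities, not integrability failures. The paper manufactures them as tangencies of a family of immersions $f_t:M\to\tilde{M}\times\mathbb{R}$ with the foliation $\tilde{\mathcal{F}}\times\mathbb{R}$, built by holonomic approximation over a skeleton (Theorem \ref{HAT}) plus the Eliashberg--Mishachev wrinkling theorem on the top cells; then $\mathcal{F}_t=f_t^{-1}(\tilde{\mathcal{F}}\times(0,\infty))$ is automatically a singular foliation and $\omega_t=f_t^*(w\tilde{\beta}_t+\alpha_t\wedge dw)$ is automatically closed at $t=1$ because it is the pullback of a (leafwise exact, suitably extended) closed form --- this, not a ``continuity across $\Sigma_1$'' argument, is what gives $d\omega_1=0$. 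Note also that even a globally closed $2$-form of constant rank $2n$ would not suffice by itself, since its rank-$2n$ symplectic complement need not be integrable. Finally, the role of $q=2$ is not to make a generic integrability locus small; it enters in the regularization of the wrinkled composition, where tangency directions $\partial_{y_1},\partial_{y_2}$ can be traded against each other but a triple tangency locus (present when $q>2$) would force the rank to drop too far. None of this machinery appears in your proposal, so the construction of $(\mathcal{F}_t,\Sigma_t,\omega_t)$ is missing.
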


It is known from Theorem:1.5.6 of \cite{Dufour} and Theorem:2.14 of \cite{Vaisman} that a symplectic foliation determines a poisson structure. Moreover any foliation (singular or regular) with a leafwise non-degenerate $2$-form determines a bivectorfield. Hence in terms of poisson geometry \ref{Main} states 

\begin{theorem}
\label{Main-1}
Let $M^{2n+q}$ be a closed manifold with $q=2$ and $\pi_0$ be a regular bi-vectorfield of rank $2n$ on it such $Im(\#\pi_0)$ is integrable distribution. Then there exists a homotopy of bi-vectorfields $\pi_t,\ t\in I$ (not regular) such that $Im(\#\pi_t)$ is integrable and $\pi_1$ is a poisson bi-vectorfield.
\end{theorem}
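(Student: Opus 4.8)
The plan is to obtain Theorem~\ref{Main-1} from Theorem~\ref{Main} through the standard correspondence between regular Poisson structures and leafwise symplectic foliations. Starting from a regular bi-vectorfield $\pi_0$ of rank $2n$ with $D_0:=\mathrm{Im}(\#\pi_0)$ an integrable distribution, let $\mathcal{F}_0$ be the foliation integrating $D_0$; it has codimension $q=2$. Skew-symmetry of $\pi_0$ gives that $\ker(\#\pi_0)$ is the annihilator of $T\mathcal{F}_0$, so $\#\pi_0$ descends to an isomorphism $T^*\mathcal{F}_0\xrightarrow{\sim}T\mathcal{F}_0$ whose inverse is a leafwise non-degenerate two-form $\omega_0^{\mathcal{F}_0}\in\Gamma(\wedge^2 T^*\mathcal{F}_0)$; extending it to a global two-form $\omega_0$ (say with $\ker\omega_0=\nu\mathcal{F}_0$) we get $(\mathcal{F}_0,\omega_0)\in\Delta_2(M)$, since $\omega_0^n|_{T\mathcal{F}_0}=(\omega_0^{\mathcal{F}_0})^n\neq 0$. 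Feed this into Theorem~\ref{Main}: it returns singular foliations $\mathcal{F}_t$ with singular loci $\Sigma_t$ (so $\Sigma_0=\emptyset$) and two-forms $\omega_t$ with $\omega_t|_{T\mathcal{F}_t}$ non-degenerate off $\Sigma_t$ and $d\omega_1=0$. On $M\setminus\Sigma_t$ one inverts $\omega_t|_{T\mathcal{F}_t}$ to a leafwise symplectic form, hence to a bi-vectorfield $\pi_t$ with $\#\pi_t$ of rank $2n$ and image $T\mathcal{F}_t$ there; $[\pi_t,\pi_t]=0$ holds precisely when $d_{\mathcal{F}_t}(\omega_t|_{T\mathcal{F}_t})=0$, and $d\omega_1=0$ forces the leafwise restriction of $\omega_1$ to be $d_{\mathcal{F}_1}$-closed, so $\pi_1$ is Poisson, while $\mathrm{Im}(\#\pi_t)$ is integrable (as a singular distribution) because $\mathcal{F}_t$ is a singular foliation.

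The one point that is not formal is that each $\pi_t$, a priori defined only on $M\setminus\Sigma_t$, must extend smoothly across $\Sigma_t$, with its rank dropping there so that it is genuinely ``not regular''. This fails for an arbitrary singular symplectic foliation --- e.g.\ the leafwise K\"ahler form on the fibres of $\sum z_i^2$ has an inverse bi-vector that does not even extend continuously over the critical point --- so here one must use the \emph{proof} of Theorem~\ref{Main}, not only its statement: the singular foliations it produces are in a fixed local normal form near $\Sigma_t$, with the leafwise symplectic form normalized there so that its inverse extends to a smooth bi-vectorfield on $M$, of lower rank exactly along $\Sigma_t$. Granting this, Theorem~\ref{Main-1} follows.

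The substantive difficulty therefore sits in Theorem~\ref{Main}, and for it the natural plan is a handle induction on the closed manifold $M^{2n+2}$. Over handles of index $<2n+2$ the foliated manifold stays open, so the leafwise symplectic, leafwise closed structure extends by the relative form of the Fernandes--Frejlich $h$-principle (Theorem~\ref{Fernandes}); the obstruction is the finitely many top handles $D^{2n+2}$, glued along $S^{2n+1}$, which cannot be filled by a regular foliation and where a singularity of $\mathcal{F}$ must be born. One fills such a handle by an explicit singular model adapted to $q=2$ --- this is exactly where the hypothesis on $q$ enters, the codimension-$2$ local picture being that of the fibres of a map to a surface, for which complex-Morse type models are available --- and one then matches the leafwise contact foliation induced on $S^{2n+1}$ by this model with the one induced there by the structure already built on the rest of $M$; since $S^{2n+1}$ is a closed manifold, this matching is carried out with Theorem~\ref{Contact}, after the two leafwise contact foliations have been shown (or arranged) to be formally homotopic, and $\Sigma_t$ is born at the moment of gluing. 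I expect this matching step to be the main obstacle --- both the homotopy-theoretic bookkeeping, in which the codimension-$2$ plane fields on $S^{2n+1}$ and the leafwise almost-contact data must be made homotopic, and the promotion of the resulting formal homotopy to a genuine homotopy of leafwise symplectic foliations on a collar of $S^{2n+1}$ --- with the normal-form point flagged above as the recurring secondary technicality.
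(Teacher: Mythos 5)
For the statement actually under review, your first two paragraphs do what the paper does: the paper offers no separate argument for Theorem~\ref{Main-1} at all, it simply presents it as the Poisson-geometric restatement of Theorem~\ref{Main}, and your dictionary ($\#\pi_0$ inverting to a leafwise non-degenerate $2$-form, $[\pi,\pi]=0$ corresponding to $d_{\mathcal{F}}$-closedness of the leafwise form) is the intended one. Your observation that the bivector obtained by inverting $\omega_t|_{T\mathcal{F}_t}$ on $M\setminus\Sigma_t$ need not extend smoothly across $\Sigma_t$ for an arbitrary singular symplectic foliation is a genuine point that the paper does not address; one does have to go into the normal form near the singular locus produced in the proof of Theorem~\ref{Main} (there the singularities are wrinkle-type, arising as the fold locus $\{z^2+|(t,y)|^2=1\}$ of an explicit local model), and the paper leaves this verification implicit.

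Where you diverge substantially from the paper is in your sketch of Theorem~\ref{Main} itself. The paper does not do a handle induction. It stabilizes to $\tilde M=M\times\mathbb{R}$ with the foliation $\mathcal{F}_0\times\mathbb{R}$, applies the Borman--Eliashberg--Murphy theorem (Theorem~\ref{Contact}) once, globally, to produce a leafwise contact structure $(\alpha_1,\beta_1)$ on this foliated open manifold, and then deforms the graph embedding $f_0:M\to M\times\{0\}\times\{1\}\subset\tilde M\times\mathbb{R}$ through a family of immersions $f_t$ built from holonomic approximation (Theorem~\ref{HAT}) over a skeleton plus wrinkled maps (Theorem 2.8) over the top cells; the singular locus $\Sigma_t$ is exactly where $f_t$ fails to be transverse to $\tilde{\mathcal{F}}\times\mathbb{R}$, and the structures are obtained by pulling back $w\tilde\beta_t+\alpha_t\wedge dw$. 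In that scheme the hypothesis $q=2$ enters not through complex-Morse models for maps to a surface, but in the final regularization step, where the non-transversality locus is written as a union of two sets $\{\partial_{y_1}\ \text{tangent}\}\cup\{\partial_{y_2}\ \text{tangent}\}$ and each is handled by rotating the other $y$-direction; for $q>2$ the triple intersection obstructs this. By contrast, the two steps you yourself flag as the main obstacles in your handle-by-handle scheme --- producing an explicit singular filling of the top handle $D^{2n+2}$, and upgrading a formal matching of leafwise contact structures on $S^{2n+1}$ to a genuine matching of symplectic foliations on a collar --- are left entirely open in your proposal, and the second in particular is not supplied by Theorem~\ref{Contact} as stated (it gives a homotopy of leafwise almost contact structures to a contact one, not an isotopy or concordance statement matching two given genuine structures across a collar). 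So as a proof of Theorem~\ref{Main-1} modulo Theorem~\ref{Main} your proposal is correct and in fact more careful than the paper; as a proof strategy for Theorem~\ref{Main} it is a different and incomplete route, whose missing ingredients are precisely what the paper's wrinkling construction is designed to provide.
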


We organize the paper as follows. In section-2 we shall explain the preliminaries of the theory of $h$-principle and of wrinkle maps which are needed in the proof of \ref{Main} which we present in section-3. 

\section{Preliminaries} We begin with the theory of $h$-principle. Let $X\to M$ be any fiber bundle and let $X^{(r)}$ be the space of $r$-jets of jerms of sections of $X\to M$ and $j^rf:M\to X^{(r)}$ be the $r$-jet extension map of the section $f:M\to X$. A section $F:M\to X^{(r)}$ is called holonomic if there exists a section $f:M\to X$ such that $F=j^rf$. In the following we use the notation $Op(A)$ to denote a small open neighborhood of $A\subset M$ which is unspecified.   

\begin{theorem}(\cite{Eliashberg} Holonomic Approximation Theorem)
\label{HAT}
Let $A\subset M$ be a polyhedron of positive co-dimension and $F_z:Op(A)\to X^{(r)}$ be a family of sections parametrized by a cube $I^m,\ m=0,1,2,...$ such that $F_z$ is holonomic for $z\in Op(\partial I^m)$. Then for given small $\varepsilon,\delta >0$ there exists a family of $\delta$-small (in the $C^0$-sense) diffeotopies $h^{\tau}_{z}:M\to M,\ \tau\in [0,1],\ z\in I^m$ and a family of holonomic sections $\tilde{F}_z:Op(h^1_z(A))\to X^{(r)},\ z\in I^m$ such that 
\begin{enumerate}
\item $h^{\tau}_z=id_M$ and $\tilde{F}_z=F_z$ for all $z\in Op(\partial I^m)$\\
\item $dist(\tilde{F}_z(x),(F_z)_{\mid Oph^1_z(A)}(x))<\varepsilon$ for all $x\in Op(h^1_z(A))$\\ 
\end{enumerate} 

\end{theorem}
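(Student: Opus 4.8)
\medskip
\noindent\textbf{Proof strategy.}
The plan is to prove this by induction on the dimension of $A$, reducing the inductive step to a local ``zig-zag'' lemma about a single cell, in which the positive-codimension hypothesis is what provides room to manoeuvre. Everything is kept continuous in the parameter $z\in I^{m}$ and relative to $Op(\partial I^{m})$: since the $F_{z}$ are already holonomic there, a cutoff in the $z$-variable lets us demand that every diffeotopy we build is $\mathrm{id}_{M}$ and every section is unchanged for $z\in Op(\partial I^{m})$; I therefore drop $z$ from the notation below and recover it only by naturality of the construction. Fix a triangulation of $A$ fine enough that each simplex lies in a single chart, and suppose inductively that, after a $\delta$-small diffeotopy, we have a genuine section $f$ on a neighbourhood of the $(\ell-1)$-skeleton $A^{(\ell-1)}$ with $\mathrm{dist}(j^{r}f,F)<\varepsilon$ there --- the base case $\ell=0$ being trivial, since the Taylor polynomial of $F$ at a point is a holonomic germ. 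The $\ell$-simplices are handled one at a time (or, after a finite colouring, in parallel): at each step one extends $f$ over a thin $M$-neighbourhood of one $\ell$-simplex $\sigma$, keeping it fixed on $Op(A^{(\ell-1)})$ and on the neighbourhoods of the simplices already treated; so it suffices to describe this single extension, given $f$ holonomic and $\varepsilon$-close to $F$ on $Op(\partial\sigma)$, and allowing a further $\delta$-small isotopy supported near $\sigma$.

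In a chart, identify $Op(\sigma)$ with $I^{\ell}\times I^{n-\ell}\subset\R^{n}$, $\sigma=I^{\ell}\times 0$, and write $I^{\ell}=I^{\ell-1}\times[0,1]$ with last coordinate $s$. Since $A$ has positive codimension, $\ell\le n-1$, so at least one transverse coordinate $t$ is available --- this is the sole point at which the codimension hypothesis is used. Subdivide $[0,1]$ into $N$ short intervals and replace $\sigma$ by a \emph{staircase} $\tilde\sigma$: a union of horizontal terraces lying over successive $s$-intervals at heights $c_{j}$ in the $t$-direction, the $c_{j}$ increasing in steps of size $\eta$, joined by short ``risers'' that are thin in $s$. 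This $\tilde\sigma$ is the image of $\sigma$ under a diffeotopy of $C^{0}$-size comparable to $N\eta$, which we keep below $\delta$ and supported near $\sigma$. On each terrace one re-synchronises with $F$ by applying the inductive statement in dimension $\ell-1$ (legitimate because $\ell-1<n$) to a slice of that terrace, obtaining there a holonomic germ close to $F$; on the first terrace one may instead reuse the $f$ already defined near $\partial\sigma$. On each riser one interpolates, in the variables $(s,t)$, between the two holonomic germs on the adjacent terraces. Reassembled, this gives a holonomic section $\tilde f$ on $Op(\tilde\sigma)$; restoring $z$, the family $\tilde f=\tilde f_{z}$ is continuous and is unchanged over $Op(\partial I^{m})$, and since the isotopy is $\delta$-small, $h^{1}_{z}(\sigma)$ stays inside $Op(\sigma)$, so $F$ restricts to $Op(h^{1}_{z}(\sigma))$.

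The substance of the argument --- and the step I expect to be the main obstacle --- is the quantitative check that $\mathrm{dist}(j^{r}\tilde f,F)<\varepsilon$ everywhere on $Op(\tilde\sigma)$, equivalently that $\tilde f$ is $C^{r}$-close to the local Taylor reconstruction of $F$. Over the terraces this is immediate. Over a riser there is a genuine tension: it is thin in $s$ while the section must change across it, so a cutoff of width $\eta$ has derivatives of size $\eta^{-r}$; the way out is to run the lower-dimensional statement with an accuracy $\varepsilon''$ far smaller than the final $\varepsilon$, chosen \emph{after} $N$ (fixed by the modulus of continuity of $F$ and by $\varepsilon$) and $\eta$ (of order $\delta/N$) have been fixed, with $\varepsilon''$ of order $\varepsilon\eta^{r}$, so that the $\eta^{-r}$ blow-up of the interpolation is absorbed; this ordering is what keeps the construction free of circularity. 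A further delicate point of the same flavour is to ensure that the neighbourhoods furnished by the $(\ell-1)$-dimensional step actually cover the terraces, which is arranged by choosing the staircase geometry adaptively as one proceeds. Granting these estimates, one composes the finitely many per-simplex isotopies over the triangulation and over the skeleta $\ell=0,1,\dots,\dim A$, obtaining the required families $h^{\tau}_{z}$ and $\tilde F_{z}=j^{r}f_{z}$; the relative and parametric conditions survive at every stage by the cutoffs. In short, the crux is the bookkeeping on the risers --- making the unavoidably steep transition regions, and the shrinking neighbourhoods, contribute arbitrarily little, uniformly in $z$, while keeping the perturbation $C^{0}$-small.
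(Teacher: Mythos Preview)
The paper does not prove this statement at all: Theorem~\ref{HAT} is quoted from \cite{Eliashberg} (the Eliashberg--Mishachev book) and used as a black box throughout Section~3, so there is no proof in the paper to compare your attempt against. Your sketch is a faithful outline of the standard argument given in that reference --- induction over the skeleta of a fine triangulation, the ``staircase'' modification of each top cell in a transverse direction (available precisely because of the positive-codimension hypothesis), re-synchronisation on the terraces via the lower-dimensional inductive statement, interpolation on the risers, and the crucial ordering of constants $N\to\eta\sim\delta/N\to\varepsilon''\sim\varepsilon\eta^{r}$ that tames the $C^{r}$ blow-up on the thin risers --- so nothing is missing from your strategy and nothing deviates from the source the paper cites.
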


\begin{remark}
\label{Relative}
Relative version of \ref{HAT} is also true. More precisely let the sections $F_z$ be already holonomic on $Op(B)$ for a sub-polyhedron $B$ of $A$, then the diffeotopies $h^{\tau}_z$ can be made to be fixed on $Op(B)$ and $\tilde{F}_z=F_z$ on $Op(B)$.  
\end{remark}

Let $\mathcal{R}$ be a subset of $X^{(r)}$. Then $\mathcal{R}$ is called a differential relation of order $r$. $\mathcal{R}$ is said to satisfy $h$-principle if any section $F:M\to \mathcal{R}\subset X^{(r)}$ can be homotopped to a holonomic section $\tilde{F}:M\to \mathcal{R}\subset X^{(r)}$ through sections whose images are contained in $\mathcal{R}$.\\

Now we briefly recall preliminaries of wrinkled maps following \cite{Wrinkle}. Consider the following maps \[w,e:\mathbb{R}^{q-1}\times \mathbb{R}^{2n}\times \mathbb{R}\to \mathbb{R}^{q-1}\times \mathbb{R}\] \[w_s(y,x,z)=(y,z^3+3(|y|^2-1)z-\Sigma_1^sx_i^2+\Sigma_{s+1}^{2n}x_i^2)\]
 \[e_s(y,x,z)=(y,z^3+3|y|^2z-\Sigma_1^sx_i^2+\Sigma_{s+1}^{2n}x_i^2)\] where $y\in \mathbb{R}^{q-1}$, $z\in \mathbb{R}$ and $x\in \mathbb{R}^{2n}$. Observe that the singular locus of $w_s$ is \[\Sigma(w_s)=\{x=0,\ z^2+|y|^2=1\}\] Let $D$ be the disc enclosed by $\Sigma(w_s)$, i.e, \[D=\{x=0,\ z^2+|y|^2\leq 1\}\] 

A fibered map over $B$ is given by a map $f:U\to V$, where $U\subset M$ and $V\subset Q$ with submersions $a:U\to B$ and $b:V\to B$ such that $b\circ f=a$.\\
Denote by $T_BM$ and $T_BQ$, the distributions i.e, subbundles $ker(a)\subset TM$ and $ker(b)\subset TQ$ respectively. The fibered differential $df_{\mid T_BM}$ is denoted by $d_Bf$. A fibered submersion is a map $f$ as above such that $d_Bf$ is fiberwise surjective. Similarly we can define fibered epimorphism $\phi:T_BM\to T_BQ$ and a homotopy of fibered epimorphism is family of fibered epimorphisms $\phi_t$.\\

Now we define the equivalence of fibered maps. So consider two fibered maps $f:U\to V$ and $f':U'\to V'$ over the base $B$ and $B'$ with submersions $a,b$ and $a',b'$ respectively. $f$ and $f'$ are called equivalent if there exist open subsets $A\subset B$, $A'\subset B'$, $W\subset V$, $W'\subset V'$ with diffeomorphisms $\phi:U\to U'$, $\psi:W\to W'$ and $s:A\to A'$ such that the following diagram commutes. \\

\[
\xymatrix@=2pc@R=2pc{
& U\ar@{->}[rrrr]^f \ar@{->}[dr]^{\phi}\ar@{->}[dddrr]_a & & & & W \ar@{->}[dl]_{\psi}\ar@{->}[dddll]^b\\
& & U'\ar@{->}[rr]^{f'}\ar@{->}[dr]_{a'} & & W'\ar@{->}[dl]^{b'} &\\
& &  & A' & &\\
& &  & A\ar@{->}[u]^s & &\\ 
}
\]

Observe that if we consider the projection on first $k$-factors, where $k<q-1$, then $w_s$ is a fibered map.\\

\begin{definition}(\cite{Wrinkle})
\label{Wrinkled map}
A fibered map $f:M^{2m+q}\to Q^q$ between smooth manifolds is called a fibered wrinkled map if there exists a disjoint union of open subsets $U_1,...,U_l\subset M$ such that $f_{\mid M-U}$ is a fibered submersion, where $U=\cup_1^lU_i$ and $f_{\mid U_i}$ is equivalent to $w_s$, for some $s$.\\
It is called an embryo if $f_{\mid U_i}$ is equivalent to $e_s$, for some $s$.
\end{definition}

A special wrinkle map is a map $\hat{\alpha}:\Omega \times \mathbb{R}\to \mathbb{R}^{n},\ (\Omega\subset \mathbb{R}^{n-1}-open)$ given by \[\hat{\alpha}(x_1,...,x_n)=(x_1,...,x_{n-1},\alpha(x_1,...,x_n))\] where $\alpha$ is given by \[\alpha(x_1,...,x_n)=x_n^3-3\mu_1(x_1,...,x_{n-1})x_n+\mu_2(x_1,...,x_{n-1})\] where $\mu_1,\mu_2$ are arbitrary functions. The singularity set $\Sigma$ of $\hat{\alpha}$ is given by \[\Sigma=\{(x_1,...,x_{n-1}):x_n^2=\mu_1(x_1,...,x_{n-1})\}\]

 We refer to \cite{Wrinkle} for more details on special wrinkle.\\

Let $p:\mathbb{R}^{2n+q}\to \mathbb{R}^{2n+q-1}$ be the projection on the first $2n+q-1$ factors and let $\beta:[a,b]\to \mathbb{R}$ be a 1-dimensional wrinkle i.e, $\beta$ is a Morse function with two critical points $c$ and $d$ respectively with $c$ the maximum and $d$ the minimum. Then $\beta(c)-\beta(d)$ is called the span of the wrinkle $\beta$. If $f=\hat{\alpha}$ is a special wrinkle then for each $x'\in \Omega$ the restriction $\alpha_{\mid p^{-1}(x')}$ is either non-singular, a wrinkle or an embryo. The function $s_f(x')=span(\alpha_{\mid p^{-1}(x')})$ is called the span function of the wrinkle $f$.\\

The base of $f$ is $p(\Sigma)$. A wrinkle is called small if both its base and span is small.\\ 

 We refer the reader \cite{Wrinkle} for more details. By combining Lemma-2.1B and Lemma-2.2B of \cite{Wrinkle} we get the following

\begin{theorem}(\cite{Wrinkle})
\label{WrinkleKey}
Let $g:I^n\to I^q$ be a fibered submersion over $I^k$ and $\theta:I^n\to I^n$ be a fibered wrinkled map over $I^k$ with one wrinkle. Then there exists a fibered wrinkled map $\psi$ with very small wrinkles and which agrees with $\theta$ near $\partial I^n$ such that $g\circ\psi$ is a fibered wrinkled map.
\end{theorem}

For the definition of $\psi$ in the conclusion of \ref{WrinkleKey} above please see the Appendix \ref{Appendix}. But it does not contain a proof for which we refer \cite{Wrinkle}.

\section{Main Theorem} In this section we prove \ref{Main}. \\

Consider $\tilde{M}=M\times \mathbb{R}$ and let us denote the co-dimension-$q$ foliation $\mathcal{F}_0\times \mathbb{R}$ on $\tilde{M}$ by $\tilde{\mathcal{F}}$ with a $\tilde{\mathcal{F}}$-leafwise one form $\alpha_0$ such that $\alpha_0(\partial_s)=1$ and $ker(\alpha_0)_{\mid (x,s)}=T_x\mathcal{F}_0$. Observe that if we extend $\omega_0$ to $\tilde{M}$ by the requirement that $\omega_0(\partial_s,-)=0$, then $(\alpha_0\wedge \omega_0^n)_{\mid T\tilde{\mathcal{F}}}\neq 0$. Let $(\omega_0)_{\mid T\tilde{\mathcal{F}}}=\beta_0$. Then $(\alpha_0,\beta_0)$ is a $\tilde{\mathcal{F}}$-leafwise almost contact structure. Then according \ref{Contact} there exists a homotopy of pairs $(\alpha_t,\beta_t)$ defining a homotopy of $\tilde{\mathcal{F}}$-leafwise almost contact structures consisting of a $\tilde{\mathcal{F}}$-leafwise one form $\alpha_t$ and a $\tilde{\mathcal{F}}$-leafwise two form $\beta_t$ such that $\beta_1=d_{\tilde{\mathcal{F}}}\alpha_1$, i.e, $(\alpha_1,\beta_1)$ is a $\tilde{\mathcal{F}}$-leafwise contact structure. Now let $L_t=ker(\alpha_t)\subset T\tilde{\mathcal{F}}$ and $G_t^1=L_t\oplus \nu\tilde{\mathcal{F}}\oplus \mathbb{R}$, where $\nu\tilde{\mathcal{F}}$ is the normal bundle.\\

Now observe that the embedding $f_0:M\to M\times \{0\}\hookrightarrow \tilde{M}\times \mathbb{R}$ is $\pitchfork$ to $\tilde{\mathcal{F}}\times \mathbb{R}$ and $Im(df_0)\cap (T\tilde{\mathcal{F}}\times \mathbb{R})=L_0$. First extend $\beta_t$ to $\tilde{M}$ and call it $\tilde{\beta}_t$ in such a way that $ker(\tilde{\beta}_t)=\nu \tilde{\mathcal{F}}$. Let $X_t=ker(\beta_t)$ be the vector field on $\tilde{M}$ and consider the family of $2$-dimensional foliation $\mathcal{G}_t$ generated by $X_t$ and $\partial_w$, where $w$ is the $\mathbb{R}$-variable in $\tilde{M}\times \mathbb{R}$. Observe that $\alpha_t\wedge dw$ is a  $\mathcal{G}_t$-leafwise symplectic form.\\

 Now we shall perturb $f_0$ by a homotopy of immersions $f_t$ such that $f_t$ will be tangent to $\tilde{\mathcal{F}}\times \mathbb{R}$ only on $\Sigma_t$ and on $M-\Sigma_t$, $f_t\pitchfork \tilde{\mathcal{F}}\times \mathbb{R}$ (observe that non-transversallity does not mean tangency in this context), i.e, $Im(df_t)\cap (T\tilde{\mathcal{F}}\times \mathbb{R})$ is of dimension $2n$ and $Im(df_t)\cap (T\tilde{\mathcal{F}}\times \mathbb{R})$ is close to $L_t$. As $\tilde{\beta_t}^n_{\mid L_t}\neq 0$, we conclude that the restriction of $\tilde{\beta_t}+ \alpha_t\wedge dw$ is non-degenerate on $Im(df_t)\cap T\tilde{\mathcal{F}}\times \mathbb{R}$. Hence we only need to set $\mathcal{F}_t=f_t^{-1}(\tilde{\mathcal{F}}\times \mathbb{R})$ and $\omega_t=f_t^*e^w(\tilde{\beta}_t+\alpha_t\wedge dw)$.\\

First divide the interval $I$ as \[I=\cup_1^N[(i-1)/N,i/N]\] and assume that $f_t$ is defined on $[0,(i-1)/N]$. Observe that the limit \[lim_{x\to \Sigma_{(i-1)/N}}Im(df_{(i-1)/N})\cap (T\tilde{\mathcal{F}}\times \mathbb{R})\] exists and is of dimension $2n$ and is close to $L_{(i-1)/N}$. Let $\bar{L}_{(i-1)/N}\subset T\tilde{\mathcal{F}}\times \mathbb{R}$ be the $2n$-dimensional distribution which equals $Im(df_{(i-1)/N})\cap T\tilde{\mathcal{F}}\times \mathbb{R}$ on $M-\Sigma_{(i-1)/N}$ and on $\Sigma_{(i-1)/N}$ it is the limit. Set $\nu_{(i-1)/N}=Im(df_{(i-1)/N})/\bar{L}_{(i-1)/N}$ and $G_t^i,\ t\in [(i-1)/N,i/N]$ as \[G_t^i=L_t\oplus \nu_{(i-1)/N}\] Observe that $Im(df_{(i-1)/N})$ approximates $G_{(i-1)/N}^i$. So if $N$ is large then there exists a family of monomorphisms $F_t,\ t\in[(i-1)/N,i/N]$ such that $F_{(i-1)/N}=df_{(i-1)/N}$ and $Im(F_t)$ approximates $G_t^i$ and hence $F_t$ tangent to $T\tilde{F}\times \mathbb{R}$ only on a slightly perturbed $\Sigma_{(i-1)/N}$. \\

Choose a triangulation of $M$ which is fine and $\Sigma_{(i-1)/N}\subset A$, where $A$ is the $(2n+q-1)$-skeleton of the triangulation. As the triangulation is fine all $(2n+q)$-simplices under the image of $f_{(i-1)/N}$ is contained in a neighborhood diffeomorphic to $I^{2n+q+2}$ and on it $\tilde{\mathcal{F}}\times \mathbb{R}$ is given by the projection $\pi:I^{2n+q+2}\to I^q$ (projection on the first $q$-factors). \\

Without loss of generality let us assume $F_t$ is defined for $t\in I$ instead of $t\in [(i-1)/N,i/N]$. Let \[\bar{F}_t=F_{\sigma(t)}\] where $\sigma:I\to I$ is a smooth map such that $\sigma=0$ on $[0,\varepsilon]\cup [1-\varepsilon,1]$ and $\sigma=1$ on a neighborhood of $1/2$. Observe that $\bar{F}_t$ is holonomic for $t\in \partial I$.\\

Use \ref{HAT} for $\bar{F}_t$ to get a family of immersions $\bar{f}_t$ defined on $Op(h^1_t(A))$ so that $d\bar{f}_t$ approximates $\bar{F}_t$ on $Op(h^1_t(A))$, where $h^{\tau}_t$ is $\delta$ small with $h^1_t=id$ for $t\in [0,\varepsilon]\cup [1-\varepsilon,1]$. The $\delta$ above will be used later so the reader needs to keep note of this fact.  We approximate $\bar{F}_t$ by $F'_t$ such that $F'_t=d\bar{f}_t$ on $Op(h^1_t(A))$.\\

It is enough to consider one simplex $\Delta$. Let $\Delta'\subset \Delta$ be a $2\delta$-smaller simplex so that $h^1_{t}(\Delta)$ does not intersect $\Delta'$, $\delta$ is produced by applying \ref{HAT} to $\bar{F}_t$ above. \\

 \begin{center}
\begin{picture}(300,150)(-100,5)\setlength{\unitlength}{1cm}
\linethickness{.075mm}

\multiput(-1,1.5)(6,0){2}
{\line(0,1){3}}
\multiput(0,2.5)(4,0){2}
{\line(0,1){1}}

\multiput(-1,1.5)(0,3){2}
{\line(1,0){6}}
\multiput(0,2.5)(0,1){2}
{\line(1,0){4}}
\put(1,1){$\Delta'\subset \Delta$}
\put(-.8,3){$\longleftrightarrow$}
\put(-.6,2.8){$2\delta$}

\end{picture}\end{center}

Define monomorphisms $\tilde{F}^{\delta}_t$ depending on $\delta$ as follows. On $Op(\partial \Delta)$, set $\tilde{F}^{\delta}_t=d(\bar{f}_t\circ h^1_t)$. Now observe there exists an isotopy of embeddings \[\tilde{g}_{\tau}:\Delta-Op(\partial \Delta)\to \Delta-Op(\partial \Delta)\] such that $\tilde{g}_0=id$ and $\tilde{g}_1(\Delta-Op(\partial \Delta))=\Delta'$. Any element of $(\Delta-Op(\partial \Delta))-\Delta'$ is of the form $\tilde{g}_{\tau}(x),\ x\in \partial(\Delta-Op(\partial \Delta))$ for some value of $\tau$.\\

\begin{center}
\begin{picture}(300,150)(-100,5)\setlength{\unitlength}{1cm}
\linethickness{.075mm}

\multiput(-1,1.5)(6,0){2}
{\line(0,1){3}}
\multiput(0,2.5)(4,0){2}
{\line(0,1){1}}

\multiput(-1,1.5)(0,3){2}
{\line(1,0){6}}
\multiput(0,2.5)(0,1){2}
{\line(1,0){4}}
\put(1,1){$\Delta'\subset \Delta-Op(\partial \Delta)$}
\put(1.5,2.8){$\Delta'$}
\end{picture}\end{center}

In the above picture $\Delta'$ is represented by the inner rectangle and the outer rectangle represents $\Delta-Op(\partial \Delta)$. The complement of $\Delta'$ in the above picture shrinks and eventually vanishes by $\tilde{g}_{\tau}$ as $\tau$ varies from $0$ to $1$.\\

Let $\gamma^x_t:I\to M$ be the path 
\[
\begin{array}{rcl}
\gamma^x_t(\tau) &=& h^{1-2\tau}_t(x),\ \tau \in [0,1/2]\\
&=& \tilde{g}_{2\tau-1}(x),\ \tau \in [1/2,1]
\end{array}  
\]
Set $(\tilde{F}^{\delta}_t)_{\tilde{g}_{\tau}(x)}=(F'_t)_{\gamma^x_t(\tau)}$. Observe that $\gamma^x_t(1)=\tilde{g}_1(x)\in \partial \Delta'$. As $\tilde{F}^{\delta}_t$-agrees with $F'_t$ along $\partial \Delta'$, we can extend $\tilde{F}^{\delta}_t$ on $\Delta$ by defining it to be $F'_t$ on $\Delta'$. Observe that \[\Sigma^{\delta}_t=\{\tilde{\mathcal{F}}^{\delta}_t\ tangent\ to\ T\tilde{\mathcal{F}}\times \mathbb{R}\}\subset \Delta-\Delta'\] The next theorem \ref{Key} extends $f_t$ from $t\in [0,(i-1)/N]$ to $t\in [0,i/N]$. To start the process i.e, to extend $f_0$ to $f_t,\ t\in[0,1/N]$ we take a fine triangulation of $M$ so that image under $f_0$ of all top dimensional simplices lies in a neighborhood diffeomorphic to $I^{2n+q+2}$ and on it $\tilde{\mathcal{F}}\times \mathbb{R}$ is given by the projection on the first $q$ factors $\pi:I^{2n+q+2}\to I^q$. 

\begin{theorem}
\label{Key}
Let $I_{\delta}=[\delta,1-\delta]$, $I_{\varepsilon}=[\varepsilon,1-\varepsilon]$ with $\varepsilon=\varepsilon(\delta)<\delta$ and $(F^{\delta}_t,b^{\delta}_t):TI^{2n+q}\to TI^{2n+q+2}$ be a family of monomorphisms such that 
\begin{enumerate}
\item $F^{\delta}_t=db^{\delta}_t$ on $I^{2n+q}-I^{2n+q}_{\varepsilon(\delta)}$, $F^{\delta}_0=db^{\delta}_0\ on\ I^{2n+q}$\\
\item $F^{\delta}_t\pitchfork \mathcal{L}$ on $I^{2n+q}_{\delta}$ for all $t$ and $Im(F^{\delta}_t)\cap T\mathcal{L}$ is of dimension $2n$ and is close to $L_t$ for all $t$ on $I^{2n+q}_{\delta}$\\
\item $\Sigma^{\delta}_t=\{F^{\delta}_t\ tangent\ to\ T\mathcal{L}\}\subset (I^{2n+q}-I^{2n+q}_{\delta})$\\
\end{enumerate}
where $\mathcal{L}$ is the foliation on $I^{2n+q+2}$ induced by the projection $\pi: I^{2n+q+2}\to I^q$ (projection on the first $q$-factors), $\tilde{\mathcal{L}}$ is such that $\mathcal{L}=\tilde{\mathcal{L}}\times I$ and $L_t\subset T\tilde{\mathcal{L}}$ is a family of $2n$-dimensional distribution. Then there is a $\delta''$ and a family of immersions $f_t:I^{2n+q}\to I^{2n+q+2}$ such that 
\begin{enumerate}
\item $f_t=b^{\delta''}_t$ on $I^{2n+q}-I^{2n+q}_{\varepsilon(\delta'')/2}$\\
\item $(\pi\circ f_t)_{\mid I^{2n+q}_{\delta''}}$ is a wrinkle map\\
\item If $\Sigma_t(I^{2n+q}-I^{2n+q}_{\delta''})=\{x\in I^{2n+q}-I^{2n+q}_{\delta''}: f_t(x)\ tangent\ to\ \mathcal{L}\}$, then on $(I^{2n+q}-I^{2n+q}_{\delta''})-\Sigma_t(I^{2n+q}-I^{2n+q}_{\delta''})$, $f_t\pitchfork \mathcal{L}$, $Im(df_t)\cap T\mathcal{L}$ is of dimension $2n$ and is close to $L_t$.\\
\end{enumerate}

Moreover we can modify $f_t$ near the wrinkles such that $f_t$ becomes tangent to $\mathcal{L}$ along the wrinkles.
\end{theorem}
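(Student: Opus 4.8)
The plan is to integrate the formal monomorphism $F^\delta_t$ to a genuine family of immersions by holonomic approximation, and then to convert the composition $\pi\circ f_t$ into an honest wrinkle map by invoking the wrinkling theorem of \cite{Wrinkle}; everything is carried out parametrically in $t\in I$ and fibered over the cube factor $I^k$ along which $\mathcal{L}=\tilde{\mathcal{L}}\times I$ is a product.

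First I would integrate $F^\delta_t$ over the inner cube. By hypothesis (1), $F^\delta_t=db^\delta_t$ is already holonomic on the collar $I^{2n+q}-I^{2n+q}_{\varepsilon(\delta)}$; by (3), its tangency locus $\Sigma^\delta_t$ lies in the annulus $I^{2n+q}-I^{2n+q}_\delta$; and by (2), on a neighbourhood of $I^{2n+q}_\delta$ it is transverse to $\mathcal{L}$ with $\mathrm{Im}(F^\delta_t)\cap T\mathcal{L}$ a $2n$-plane close to $L_t$. Applying Theorem \ref{HAT} over the codimension-one skeleton of a fine triangulation of the inner cube, in the relative form of Remark \ref{Relative} so the collar is not moved, and then extending over the remaining cells by the $h$-principle for immersions, I obtain a family of immersions $\phi_t$ (the accompanying diffeotopies being $\delta$-small) with $d\phi_t$ $C^0$-close to $F^\delta_t$ over $I^{2n+q}_\delta$. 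Openness of transversality then gives $\phi_t\pitchfork\mathcal{L}$ on $I^{2n+q}_{\delta'}$ for a slightly smaller $\delta'$, with $\mathrm{Im}(d\phi_t)\cap T\mathcal{L}$ still close to $L_t$; hence $g_t:=\pi\circ\phi_t$ is a bona fide submersion there, and the tangency locus of $\phi_t$ to $\mathcal{L}$ has moved out into the annulus.

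Next I would wrinkle the composition and reassemble. Regard $g_t:I^{2n+q}\to I^q$ (after rescaling $I^{2n+q}_{\delta'}$ to $I^{2n+q}$) as a fibered submersion over $I^k$; using the wrinkle-shaped structure of $\Sigma^\delta_t$ prepared in the discussion above, the formal solution supplied by $\pi\circ\phi_t$ together with the homotopy of its differential is, rel the collar, that of the composite of $g_t$ with a fibered wrinkled self-map $\theta_t$ of $I^{2n+q}$ with one wrinkle, equal to the identity near $\partial I^{2n+q}$. The wrinkling theorem of \cite{Wrinkle} (combining Lemmas~2.1B and 2.2B there) then produces a fibered wrinkled map $\psi_t$ with very small wrinkles, agreeing with $\theta_t$ near the boundary, such that $g_t\circ\psi_t$ is a genuine fibered wrinkled map. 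Away from its small wrinkles $\psi_t$ is a local diffeomorphism, so $\phi_t\circ\psi_t$ is there an immersion whose differential has the same image as $d\phi_t$; near the wrinkles I would correct $\phi_t\circ\psi_t$ inside the fibres of $\pi$ to restore the immersion property without altering the $\pi$-composition, which is possible because the maximal kernel dimension $2n+1$ of a wrinkle map is strictly less than the fibre dimension $2n+2$ of $\pi$. Extending the resulting immersions by $b^{\delta''}_t$ across a thin collar, for $\delta''$ small enough, gives $f_t$ with $\pi\circ f_t=g_t\circ\psi_t$: then (2) holds on $I^{2n+q}_{\delta''}$ because $g_t\circ\psi_t$ is a wrinkle map there, (3) holds because off the wrinkles $\mathrm{Im}(df_t)\cap T\mathcal{L}=d\phi_t(\ker dg_t)$ is a $2n$-plane close to $L_t$, and (1) holds once $\varepsilon(\delta'')/2$ is small compared with the collar.

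The hardest part will be the quantitative bookkeeping of scales and the precise encoding of the formal tangency as wrinkled-self-map data. The diffeotopies from Theorem \ref{HAT} are only $\delta$-small; the wrinkles from the wrinkling theorem must be small enough, and placed well inside the annulus, that the fibrewise correction near them stays holonomic past $I^{2n+q}_{\varepsilon(\delta'')/2}$; the former tangency $\Sigma^\delta_t$ must be absorbed into genuine wrinkles of $\pi\circ f_t$ so that the new tangency locus $\Sigma_t$ is confined to the thin shell $I^{2n+q}-I^{2n+q}_{\delta''}$; and all of this must vary continuously with $t$, degenerating to the identity situation on $[0,\varepsilon]\cup[1-\varepsilon,1]$ where the cut-off $\sigma$ was inserted. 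Keeping $d\phi_t$ close enough to $F^\delta_t$ to preserve the approximation of $L_t$ while simultaneously forcing the genuine wrinkle singularities to appear, and nesting the parameters as $\delta\gg\delta'\gg\delta''\gg\varepsilon(\delta'')$, is where the real work lies; once those choices are fixed, the verification of (1)--(3) is routine.
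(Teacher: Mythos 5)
Your overall architecture (holonomic approximation, then wrinkling of the composition with $\pi$, then fiberwise regularization using the two extra target dimensions) matches the paper's at the coarsest level, and your regularization step is essentially the paper's: one perturbs the fiber components $a_j$ so that $\partial_z a'_{2n+1}\neq 0$ along the singular sphere, which works precisely because the kernel of a wrinkle is smaller than the fiber dimension of $\pi$. But your first step contains a genuine gap that the rest of the argument inherits. Theorem \ref{HAT} only approximates a formal section by a holonomic one near a \emph{positive-codimension} polyhedron (after a small diffeotopy); it says nothing over the interiors of the top-dimensional cells. Filling those interiors ``by the $h$-principle for immersions'' produces an immersion $\phi_t$ whose differential is \emph{homotopic} to $F^{\delta}_t$ but not $C^0$-close to it there, so you cannot conclude that $\phi_t\pitchfork\mathcal{L}$ on $I^{2n+q}_{\delta'}$ with $Im(d\phi_t)\cap T\mathcal{L}$ close to $L_t$; that closeness over a top-dimensional set is exactly the statement one is trying to prove, and if it were available by openness of transversality there would be no need for wrinkles at all. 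The paper avoids this by never approximating over the full cube: it suspends the family to $I\times I^{2n+q}$, applies \ref{HAT} only near the hypersurface $I\times I^{q-1}\times\{1/2\}\times I^{2n}$, compresses the whole cube into a neighborhood of (the perturbed image of) that hypersurface by the isotopy $g_s$, and then converts the resulting homotopy parameter $\tau$ into the coordinate $x_q$ via the oscillating function $\gamma_l(t)=t+\phi_l(t)\sin(2\pi lt)$. It is this last exchange of a parameter direction for a spatial direction that creates the wrinkles and confines the surviving tangency locus to the shell $I^{2n+q}-I^{2n+q}_{\delta}$; in your write-up this step is deferred to ``the precise encoding of the formal tangency as wrinkled-self-map data,'' which is not bookkeeping but the core of the proof.

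A second omission: conclusion (3) and the structure of $\Sigma_t$ in the annulus require converting the locus where $f_t$ is merely \emph{not transverse} to $\mathcal{L}$ into a locus where it is genuinely \emph{tangent}, so that the rank of the induced foliation does not drop partially. The paper does this by writing the non-transversality locus as the union $\{\partial_{y_1}\ \mathrm{tangent}\}\cup\{\partial_{y_2}\ \mathrm{tangent}\}$ and rotating one $y$-direction into $\mathcal{L}\times\mathbb{R}$ along the tangency locus of the other, using the cut-off $\varphi$ with $\varphi+y_1\partial_{y_1}\varphi$ nonvanishing; this is where the hypothesis $q=2$ is used and why the construction fails for $q>2$. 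Your proposal does not address this locus at all, so even granting the first step, the verification of (3) is not routine.
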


\begin{remark}
Observe that \ref{Key} above completes the induction process and hence the proof of \ref{Main} and \ref{Main-1}.
\end{remark}

\begin{proof}
Let $\sigma:I\to I$ be a smooth map such that $\sigma=0$ on $I-I_{\varepsilon(\delta)}$ and $\sigma=1$ on a neighborhood of $1/2$. Let \[F^{\delta}:T(I\times I^{2n+q})\to T(I\times I^{2n+q+2})\] be monomorphisms given by the matrix 

\[ F^{\delta}_{(t,x)}=\left( \begin{array}{cc}
 1 & 0\\
\partial_tb^{\delta}_{\sigma(t)}(x) & F^{\delta}_{\sigma(t)}(x)\\
 \end{array}\right)\]
 
 Which covers $b^{\delta}(t,x)=(t,b^{\delta}_{\sigma(t)}(x))$. So $F^{\delta}=db^{\delta}$ on $I\times (I^{2n+q}-I^{2n+q}_{\varepsilon(\delta)})$. Let $\chi^{\delta}:I^{2n+q+1}\to I$ be a smooth map such that $\chi^{\delta}=0$ on $I^{2n+q+1}-I^{2n+q+1}_{\varepsilon(\delta)}$ and $\chi^{\delta}=1$ on $I^{2n+q+1}_{\delta'}$, $\delta'<\delta$.

 Set $\Xi_{\tau}:I^{2n+q}\to I^{2n+q},\ \tau\in I$ as \[\Xi_{\tau}(x_1,...,x_{2n+q})=(x_1,...,x_{q-1},(1-\chi^{\delta})x_q-\chi^{\delta}(\eta\tau-x_q),x_{q+1},...,x_{2n+q})\] Where $\eta>0$ is so small such that $x_q-\eta \chi^{\delta}$ remains non negative. Now set $(F^{\delta}_{\tau})_{(t,x)}=F^{\delta}_{(t,\Xi_{\tau}(x))}$ which covers $b^{\delta}_{\tau}(t,x)=b^{\delta}(t,\Xi_{\tau}(x))$. Observe that 
 \begin{enumerate}
 \item $F^{\delta}_{\tau}=db^{\delta}=db^{\delta}_{\tau}$ on $(I-I_{\varepsilon(\delta)})\times(I^{q-1}- I^{q-1}_{\varepsilon(\delta)})\times I \times (I^{2n}-I^{2n}_{\varepsilon(\delta)})$\\
 \item $F^{\delta}_0=db^{\delta}=db^{\delta}_{\tau}$ on $I\times I^{q-1}\times [0,\varepsilon(\delta)]\times I^{2n}$ as $\chi^{\delta}=0$ on it.\\
 \item $F^{\delta}_1=db^{\delta}=db^{\delta}_{\tau}$ on $I\times I^{q-1}\times [1-\varepsilon(\delta),1]\times I^{2n}$ as $\chi^{\delta}=0$ on it.\\
 \end{enumerate}
 Moreover observe that $F^{\delta}_0$ is holonomic and for $\tau\in I_{\delta}$ \[\Sigma^{\delta}_{\tau}=\{F^{\delta}_{\tau}\ not\ \pitchfork\ to\ T\mathcal{L}\times \mathbb{R}^2\}\subset I^{2n+q+1}-I^{2n+q+1}_{\delta}\]
 
 Using the \ref{HAT} we can approximate $F^{\delta}_{\tau}$ on $Op(h^1_{\tau}(I\times I^{q-1}\times \{1/2\}\times I^{2n}))$ by $df^{\delta}_{\tau}$, where $f^{\delta}_{\tau}$ is a family of immersions defined on $Op(h^1_{\tau}(I\times I^{q-1}\times \{1/2\}\times I^{2n}))$. Observe that \ref{HAT} can not be applied directly as $F^{\delta}_1$ is not holonomic but this can be achieved by reparametrization of $F^{\delta}_{\tau}$ as follows. First on $\tau\in [0,1/2]$ define $F^{\delta}_{\tau}=F^{\delta}_{2\tau}$ and on $[1/2,1]$ define $F^{\delta}_{\tau}=F^{\delta}_{2-2\tau}$, so now we can apply \ref{HAT} but after applying it we will have to consider the resulting immersion only for parameter value on $[0,1/2]$ and then reparametrize accordingly.\\

  Now consider two smooth functions $\chi^i,\ i=1,2$ defined as follows\\
 $\chi^1:[0,1/2]\to [0,1]$, $\chi^1=0,\ on\ Op(0)$ and $\chi^1=1,\ on\ Op(1/2)$. $\chi^2:[1/2,1]\to [0,1]$, $\chi^2=0,\ on\ Op(1/2)$ and $\chi^2=1,\ on\ Op(1)$. Now define $g^{\delta}_{\tau}$ as follows 
 \[
\begin{array}{rcl}
g^{\delta}_{\tau} &=& b^{\delta}_0\circ g_{\chi^1(\tau)},\ \tau \in [0,1/2]\\
 &=& f^{\delta}_{\chi^2(\tau)}\circ h^1_{\chi^2(\tau)}\circ g_1,\ \tau \in [1/2,1]\\
  \end{array} 
 \]
 Where $g_s:I\times I^{2n+q}\to I\times I^{2n+q+2},\ s\in I$ is an isotopy of embeddings defined as follows

 \begin{center}
\begin{picture}(300,150)(-100,5)\setlength{\unitlength}{1cm}
\linethickness{.075mm}

\multiput(-1,1.5)(6,0){2}
{\line(0,1){3}}

\multiput(-1,1.5)(0,3){2}
{\line(1,0){6}}
\multiput(-1,2)(0,2){2}
{\line(1,0){6}}
\multiput(-1,2.5)(0,1){2}
{\line(1,0){6}}
\put(.5,1){$I\times I^{2n+q}=g_0(I\times I^{2n+q})$}

\multiput(-.9,2.6)(.2,0){30}{\line(1,0){.09}}
\multiput(-.9,2.7)(.2,0){30}{\line(1,0){.09}}
\multiput(-.9,2.8)(.2,0){30}{\line(1,0){.09}}
\multiput(-.9,2.9)(.2,0){30}{\line(1,0){.09}}
\multiput(-.9,3)(.2,0){30}{\line(1,0){.09}}
\multiput(-.9,3.1)(.2,0){30}{\line(1,0){.09}}
\multiput(-.9,3.2)(.2,0){30}{\line(1,0){.09}}
\multiput(-.9,3.3)(.2,0){30}{\line(1,0){.09}}
\multiput(-.9,3.4)(.2,0){30}{\line(1,0){.09}}

\multiput(-.9,1.6)(.2,0){30}{\line(1,0){.00005}}
\multiput(-.9,1.7)(.2,0){30}{\line(1,0){.00005}}
\multiput(-.9,1.8)(.2,0){30}{\line(1,0){.00005}}
\multiput(-.9,1.9)(.2,0){30}{\line(1,0){.00005}}

\multiput(-.9,4.1)(.2,0){30}{\line(1,0){.00005}}
\multiput(-.9,4.2)(.2,0){30}{\line(1,0){.00005}}
\multiput(-.9,4.3)(.2,0){30}{\line(1,0){.00005}}
\multiput(-.9,4.4)(.2,0){30}{\line(1,0){.00005}}

\end{picture}\end{center}

 \begin{center}
\begin{picture}(300,150)(-100,5)\setlength{\unitlength}{1cm}
\linethickness{.075mm}

\multiput(-1,4)(6,0){2}
{\line(0,1){.5}}
\multiput(-1,1.5)(6,0){2}
{\line(0,1){.5}}
\multiput(.1,2.5)(3.9,0){2}
{\line(0,1){1}}

\multiput(-1,1.5)(0,3){2}
{\line(1,0){6}}
\multiput(-1,2)(0,2){2}
{\line(1,0){6}}
\multiput(.1,2.5)(0,1){2}
{\line(1,0){3.9}}
\put(1.7,1){$g_s(I\times I^{2n+q})$}

\multiput(.1,2.6)(.13,0){30}{\line(1,0){.09}}
\multiput(.1,2.7)(.13,0){30}{\line(1,0){.09}}
\multiput(.1,2.8)(.13,0){30}{\line(1,0){.09}}
\multiput(.1,2.9)(.13,0){30}{\line(1,0){.09}}
\multiput(.1,3)(.13,0){30}{\line(1,0){.09}}
\multiput(.1,3.1)(.13,0){30}{\line(1,0){.09}}
\multiput(.1,3.2)(.13,0){30}{\line(1,0){.09}}
\multiput(.1,3.3)(.13,0){30}{\line(1,0){.09}}
\multiput(.1,3.4)(.13,0){30}{\line(1,0){.09}}

\multiput(-.9,1.6)(.2,0){30}{\line(1,0){.00005}}
\multiput(-.9,1.7)(.2,0){30}{\line(1,0){.00005}}
\multiput(-.9,1.8)(.2,0){30}{\line(1,0){.00005}}
\multiput(-.9,1.9)(.2,0){30}{\line(1,0){.00005}}

\multiput(-.9,4.1)(.2,0){30}{\line(1,0){.00005}}
\multiput(-.9,4.2)(.2,0){30}{\line(1,0){.00005}}
\multiput(-.9,4.3)(.2,0){30}{\line(1,0){.00005}}
\multiput(-.9,4.4)(.2,0){30}{\line(1,0){.00005}}

\qbezier(-1,2)(-.2,2)(.1,2.5)
\qbezier(-1,4)(-.2,3.8)(.1,3.5)
\qbezier(4,2.5)(4.1,2.1)(5,2)
\qbezier(4,3.5)(4.1,3.8)(5,4)

\end{picture}\end{center}

 \begin{center}
\begin{picture}(300,150)(-100,5)\setlength{\unitlength}{1cm}
\linethickness{.075mm}

\multiput(2,1.5)(.6,0){2}
{\line(0,1){3}}
\multiput(2.308,1.5)(.6,0){1}
{\line(0,1){3}}

\multiput(-1,4)(6,0){2}
{\line(0,1){.5}}
\multiput(-1,1.5)(6,0){2}
{\line(0,1){.5}}

\multiput(-1,1.5)(0,3){2}
{\line(1,0){6}}
\multiput(-1,2)(0,2){2}
{\line(1,0){6}}
\put(1.7,1){$g_1(I\times I^{2n+q})$}

\multiput(2,2.6)(.13,0){5}{\line(1,0){.09}}
\multiput(2,2.7)(.13,0){5}{\line(1,0){.09}}
\multiput(2,2.8)(.13,0){5}{\line(1,0){.09}}
\multiput(2,2.9)(.13,0){5}{\line(1,0){.09}}
\multiput(2,3)(.13,0){5}{\line(1,0){.09}}
\multiput(2,3.1)(.13,0){5}{\line(1,0){.09}}
\multiput(2,3.2)(.13,0){5}{\line(1,0){.09}}
\multiput(2,3.3)(.13,0){5}{\line(1,0){.09}}
\multiput(2,3.4)(.13,0){5}{\line(1,0){.09}}

\multiput(-.9,1.6)(.2,0){30}{\line(1,0){.00005}}
\multiput(-.9,1.7)(.2,0){30}{\line(1,0){.00005}}
\multiput(-.9,1.8)(.2,0){30}{\line(1,0){.00005}}
\multiput(-.9,1.9)(.2,0){30}{\line(1,0){.00005}}

\multiput(-.9,4.1)(.2,0){30}{\line(1,0){.00005}}
\multiput(-.9,4.2)(.2,0){30}{\line(1,0){.00005}}
\multiput(-.9,4.3)(.2,0){30}{\line(1,0){.00005}}
\multiput(-.9,4.4)(.2,0){30}{\line(1,0){.00005}}

\qbezier(-1,2)(1.8,2.1)(2,2.6)
\qbezier(-1,4)(1.8,3.8)(2,3.4)
\qbezier(2.6,2.6)(3.1,2.1)(5,2)
\qbezier(2.6,3.4)(3.1,3.9)(5,4)

\end{picture}\end{center}

 $g_s=id,\ s\in I$ on $(I-I_{\varepsilon(\delta)/2})\times (I-I_{\varepsilon(\delta)/2})^{q-1}\times I\times (I-I_{\varepsilon(\delta)/2})^{2n}$. This is shown as shaded region at the top and bottom in the pictures above.\\
 
 Let $\bar{g}_s:I\to I$ be such that $\bar{g}_0=id$ and $\bar{g}_1(I)\subset Op(1/2)$. Then we set $g_s=id_{I_{\varepsilon(\delta)}}\times id_{I^{q-1}_{\varepsilon(\delta)}}\times \bar{g}_s \times id_{I^{2n}_{\varepsilon(\delta)}} $ on $I_{\varepsilon(\delta)}\times I^{q-1}_{\varepsilon(\delta)}\times I \times I^{2n}_{\varepsilon(\delta)}$. This is shown in the central shaded region in the above pictures. $g_s$ can also be arranged such that $g_1(\Sigma^{\delta}_t)$ does not intersect $I\times I^{q-1}\times \{1/2\}\times I^{2n}$.\\
 
 In the non-shaded region in the third picture i.e, in the picture of $g_1(I^{2n+q+1})$, \[f^{\delta}_0=b^{\delta}_0\ and\ h^1_0=id\] and hence $g^{\delta}_{\tau}$ is well defined. The next part of the proof is same as \cite{Wrinkle}, we include it here for completion. Although in the last part we need to do some work.\\
 
 Now observe that for $\tau \in I_{\delta}$, $\{g^{\delta}_{\tau}\ not\ \pitchfork\ to\ \mathcal{L}\times \mathbb{R}\}\subset (I^{2n+q+1}-I_{\delta}^{2n+q+1})$. \\
 
  For an integer $l>0$ take a function $\phi_l:I\to I$ such that 
 \[
 \begin{array}{rcl}
 \phi_l &=& 1,\ on\ I_{1/(8l)}\\
  &=& 0,\ outside\ I_{1/(16l)}\\ 
 \end{array}
 \]
which is increasing on $[1/(16l),1/(8l)]$ and decreasing on $[1-1/(8l),1-1/(16l)]$. Set \[\gamma_l(t)=t+\phi_l(t)Sin(2\pi lt),\ t\in I\] Let $J_i$ be the interval of length $9/(16l)$ centered at $(2i-1)/2l$. Observe that $\gamma_l$ is non-singular outside $\cup J_i$ and $(\gamma_l)_{J_i}$ is a wrinkle. Also \[\partial_t\gamma_l(t)\geq l,\ t\in I-\cup J_i\]
Let $\bar{\chi}^{\delta}:I^{2n+q+1}\to I$ be such that 
\[
\begin{array}{rcl}
\bar{\chi}^{\delta} &=& 0,\ near\ \partial(I^{2n+q+1})\\
  &=& 1,\ on\ I^{2n+q+1}_{\varepsilon(\delta)}
\end{array}
\]
Now we take $\delta=\delta(l)<<1/(16l)$. Set $\tilde{\gamma}_l(x)=(1-\bar{\chi}^{\delta}(x))x_q+\bar{\chi}^{\delta}(x)\gamma_l(x_q)$. Let $\lambda:I\to I$, be such that $\lambda(0)=0$, $\lambda(1)=1$ 
\begin{enumerate}
\item $\lambda=(2i-1)/2l,\ on\ J_i$\\
\item $0<\partial_t\lambda <3,\ on\ I-\cup J_i$\\
\end{enumerate}
Set $\bar{g}^{\delta}_{\tau}=g^{\delta}_{\lambda(\tau)},\ \tau\in I$. Now consider \[(t,x_1,...,x_{2n+q})\stackrel{\rho_l}{\to}\bar{g}^{\delta}_{x_q}(t,x_1,...,x_{q-1},\tilde{\gamma}_l(x),x_{q+1},...,x_{2n+q})\] Let $\theta$ be the function $\theta(t,x)=(t,x_1,...,x_{q-1},\tilde{\gamma}_l(x),x_{q+1},...,x_{2n+q})$. Then $\theta$ is a wrinkle map and as $\delta=\delta(l)<<1/(16l)$, the wrinkles of $\theta$ do not intersect $\{g^{\delta}_{\tau}\ not\ \pitchfork\ to\ \mathcal{L}\times \mathbb{R}\}$, for $\tau \in I_{\delta}$. On $I\times I^{q-1}\times J_i\times I^{2n}$, $\rho_l$ is of the form $\bar{g}^{\delta(l)}_i\circ \theta_i$, where $\theta_i=\theta_{\mid I\times I^{q-1}\times J_i\times I^{2n}}$. So using \ref{WrinkleKey} we can replace $\theta_i$ by another wrinkle map $\psi_i$ such that $\pi \circ \bar{g}^{\delta(l)}_i\circ \psi_i$ turns out to be a fibered wrinkle map, fibered over the first factor $I$. But observe that $\bar{g}^{\delta(l)}_i\circ \psi_i$ is not an immersion. So we need to regularize it. \\

In the proof of \ref{WrinkleKey} in \cite{Wrinkle} the coordinates has never been altered and kept fixed (Please see the Appendix \ref{Appendix} bellow). Moreover in the definition of wrinkle map $w_s$ and in the definition of special wrinkle (\cite{Wrinkle} which has been used in the proof of 
\ref{WrinkleKey} in \cite{Wrinkle}) the $y$ coordinates are kept fixed $x$ coordinates dropped and only the $z$ coordinate is changed and in the case of special wrinkles only the last coordinate is changed and the rest of the coordinates are kept fixed. \\

So from the proof of \ref{WrinkleKey} in \cite{Wrinkle} we may assume without loss of generality that for all $i$, $\pi \circ \bar{g}^{\delta(l)}_i\circ \psi_i$ has many wrinkles and near each wrinkle it  is of the form \[w_s(t,y,z,x)=(t,y,z^3+3(|(t,y)|^2-1)z-\Sigma_1^sx_i^2+\Sigma_{s+1}^{2n}x_i^2)\] and on this wrinkle the foliation is induced by $\pi$ and hence $\bar{g}^{\delta(l)}_i\circ \psi_i$ is of the form \[(t,y,z,x)\mapsto(t,y,z^3+3(|(t,y)|^2-1)z-\Sigma_1^sx_i^2+\Sigma_{s+1}^{2n}x_i^2,a_1(t,y,z,x),...,a_{2n+2}(t,y,z,x))\] Its derivative is given by the matrix
 \[ \left( \begin{array}{ccc}
 I_{q} & 0 & 0\\
* & 3(z^2+|(t,y)|^2-1) & (\pm 2x_i)_1^{2n}\\
* & (\partial_za_j)_1^{2n+2} & (\partial_{x_i}a_j)_{i=1,j=1}^{i=2n,j=2n+2}\\
 \end{array}\right)\]
 
In order to regularize it is enough to $C^1$-approximate $a_j$'s by $a'_j$'s. This can be done by $C^1$-perturbing $\psi_i$ only in those directions suitably.\\

 But we shall moreover want the $\partial_za'_{2n+1}\neq 0$ along $\{z^2+|(t,y)|^2-1=0\}$, where $a'_{2n+1}$ corresponds to the $\mathbb{R}$-factor of $\tilde{M}=M\times \mathbb{R}$. Moreover we also want $\partial_{y_1}a'_{2n+2}+1 \neq 0$.\\
 
 Now we need to convert non-transversality into tangency. Let us set $\varphi:I^{2n+q+2}\to [0,1]$ be a smooth function such that $\varphi=1$ outside a neighborhood of $D$, where $D$ is the disc which encloses $\{z^2+|(t,y)|^2-1=0\ and\ x=0\}$ and on $\{z^2+|(t,y)|^2-1=0\}$, $\varphi=0$ and $\partial_{y_1}\varphi=0$, moreover $\varphi+y_1\partial_{y_1}\varphi$ is non-vanishing outside $\{z^2+|(t,y)|^2-1=0\}$. We pictorially show the graph of $\varphi$ which shows its existence.\\
 
 \begin{center}
\begin{picture}(300,150)(-100,5)\setlength{\unitlength}{1cm}
\linethickness{.075mm}

\multiput(-5,1.5)(13,0){2}
{\line(0,1){3}}

\multiput(-5,1.5)(0,3){2}
{\line(1,0){13}}
\put(1.7,1){Graph of $\varphi$}
\put(-5,1){$y_1-axis\ \longrightarrow$}

\qbezier(-5,2.5)(-4,2.5)(-4,2.5)
\qbezier(-4,2.5)(-3,2.3)(-2.8,2.1)
\qbezier(-2.8,2.1)(-2.5,1.9)(-2,1.7)
\qbezier(-2,1.7)(-1.7,1.6)(-1.3,1.5)
\qbezier(-1.3,1.5)(-1,1.5)(-0.7,1.6)
\qbezier(-0.7,1.6)(-0.4,1.8)(0,2)
\qbezier(0,2)(1,2.5)(1.5,2.5)
\qbezier(1.5,2.5)(2,2.4)(2.2,2.2)
\qbezier(2.2,2.2)(2.5,2.1)(2.9,1.7)
\qbezier(2.9,1.7)(3,1.6)(3.3,1.5)
\qbezier(3.3,1.5)(3.5,1.5)(3.7,1.5)
\qbezier(3.7,1.5)(4,1.6)(4.5,2)
\qbezier(4.5,2)(5,2.3)(5.3,2.4)
\qbezier(5.3,2.4)(5.5,2.5)(6,2.5)
\qbezier(6,2.5)(7,2.5)(8,2.5)

\end{picture}\end{center}

Where the graph touches the horizontal bottom line, i.e, the $y_1$-axis represents the sphere $\partial D \cap \{y_1-axis\}$ where \[\partial D=\{z^2+|(t,y)|^2=1\ and\ x=0\}\]

  Now let $y=(y_1,...,y_q)$ in the above. Now replace the resulting map by \[(t,y,z,x)\mapsto(t,\varphi(t,y,z,x)y_1,y_2,...,y_q,z^3+3(|(t,y)|^2-1)z-\Sigma_1^sx_i^2\] \[+\Sigma_{s+1}^{2n}x_i^2,a'_1(t,y,z,x),...,a'_{2n+2}(t,y,z,x)+y_1-y_1\varphi(t,y,x,z))\] Where in the above the last component corresponds to the $\mathbb{R}$-component of $\tilde{M}\times \mathbb{R}$, i.e, the $w$-variable. Its derivative is given by 
 
 \[ \left( \begin{array}{ccccc}
 1 & 0 & 0 & 0 & 0\\
\partial_t(y_1\varphi) & \varphi+y_1 \partial_{y_1}\varphi & * & * & *\\
0 & 0 & I_{q-2} & 0 &0\\
* & * & * & 3(z^2+|(t,y)|^2-1) & (\pm 2x_i)_1^{2n}\\
* & * & * & (\partial_za'_j)_1^{2n+1} & (\partial_{x_i}a'_j)_{i=1,j=1}^{i=2n,j=2n+1}\\
*-\partial_t(y_1\varphi) & *+1-\partial_{y_1}(y_1\varphi) & *-\partial_{y_k}(y_1\varphi) & (\partial_za'_{2n+2}-\partial_z(y_1 \varphi)) & (\partial_{x_i}a'_{2n+2}-\partial_{x_i}(y_1 \varphi))_{i=1}^{i=2n}\\
 \end{array}\right)\]
 
 
 Now observe that the projections of the column vectors \[(0,*,0,3(z^2+|(t,y)|^2-1),(\partial_za'_j)_1^{2n+1},(\partial_za'_{2n+2}-\partial_z(y_1 \varphi)))^T\] and \[(0,*,0,(\pm 2x_i)_1^{2n},(\partial_{x_i}a'_j)_{i=1,j=1}^{i=2n,j=2n+1},(\partial_{x_i}a'_{2n+2}-\partial_{x_i}(y_1 \varphi))_{i=1}^{i=2n})^T\] onto $T\tilde{\mathcal{F}}\times \mathbb{R}$ are \[((\partial_za'_j)_1^{2n+1},(\partial_za'_{2n+2}-\partial_z(y_1 \varphi)))^T\] and \[((\partial_{x_i}a'_j)_{i=1,j=1}^{i=2n,j=2n+1},(\partial_{x_i}a'_{2n+2}-\partial_{x_i}(y_1 \varphi))_{i=1}^{i=2n})^T\] and their projection on $T\tilde{\mathcal{F}}$ are \[((\partial_za'_j)_1^{2n+1})^T\] and \[((\partial_{x_i}a'_j)_{i=1,j=1}^{i=2n,j=2n+1})^T\] Whose span was already close to $\mathbb{R}\times L_t$. Let us study the effect of the operation on the second column vector. Outside $Op(D)$ it is $(0,1,0,...,0,\partial_{y_1}a'_{2n+2})$ and on $\partial D$ it is \[(0,...,0,\partial_{y_1}a'_{2n+2}+1)\] Recall $\partial_{y_1}a'_{2n+2}+1 \neq 0$. So we have rotated the vector in order to make the $y_1$ component of the vector zero and last component of it to be non-zero along $\partial D$.\\ 
 
  Along $\{g^{\delta}_{\tau}\ not\ \pitchfork\ to\  \mathcal{L}\times \mathbb{R}\}\subset I^{2n+q+1}-I^{2n+q+1}_{\delta},\ \tau\in I_{\delta}$, we can apply the same technique as above. We rotate the $y$ and $z$ components simultaneously to make them tangent to $\mathcal{L}\times \mathbb{R}$.  This way we make $\{g^{\delta}_{\tau}\ not\ \pitchfork\ to\  \mathcal{L}\times \mathbb{R}\}$ to $\{g^{\delta}_{\tau}\ tangent\ to\  \mathcal{L}\times \mathbb{R}\}$.\\
  
  Note that if $q>2$, then along a possible intersection of the three sets \[\{\partial_{z}\ tangent\ to\ \mathcal{L}\times \mathbb{R}\}\cap_{i=1}^2\{\partial_{y_i}\ tangent\ to\ \mathcal{L}\times \mathbb{R}\}\] we can not transform $\{g^{\delta}_{\tau}\ not\ \pitchfork\ to\  \mathcal{L}\times \mathbb{R}\}$ to $\{g^{\delta}_{\tau}\ tangent\ to\  \mathcal{L}\times \mathbb{R}\}$, otherwise the rank will drop and it will no longer be regular.\\
 
  Let $\bar{\rho}_l$ be the regularized map, then $\bar{\rho}_l$ is of the form $\bar{\rho}_l(t,x)=(t,x(t))$, where $x(t)$ are functions of $t$. So the required family of immersions is given by \[f_t(x)=x(\sigma^{-1}(t)),\ t\in [0,1/2]\] with reparametrization. Clearly $f_t$ has the property $(1)\ and\ (2)$. Condition $(3)$ follows from the fact that for large $l$, $d_I\rho_l$ approximates $d_I\bar{g}^{\delta}_{\tau}$ on $I\times I^{q-1}\times (I-\cup_i J_i) \times I^{2n}$ and on $I^{2n+q+1}-I^{2n+q+1}_{\delta(l)}$ whose proof is same as in 2.3A of \cite{Wrinkle} and we refer the readers to \cite{Wrinkle}. As $\delta(l)$ depends on $l$ and $\varepsilon(\delta)$ depends on $\delta$, we are done.  
\end{proof}
\begin{remark}
In \ref{Main-1} if we drop the homotopy part then it seems to be possible to show the existence of the poisson structure $\pi_1$ even in the case where the codimension-$q$ is bigger that $2$. 
\end{remark}

\section{Appendix}
\label{Appendix}
 Here we define the function $\psi$ as in the conclusion of \ref{WrinkleKey}. The proof that $\hat{\beta}$ (below) is a wrinkle map is done in the proof of {\bf 2.2A} in \cite{Wrinkle}.\\

 Let $U,V$ be open domains in $\mathbb{R}^n$ and $\hat{\alpha}:U\to V$ be a special wrinkle. Let the sphere $\Sigma \subset U$ be the singular locus of $\hat{\alpha}$ and $D$ be dick bounded by $\Sigma$.\\

Consider a smooth even function $a_M:\mathbb{R}\to \mathbb{R}_+$ such that $a_M(x)=M^2x^2$ on $|x|<(1/3M)$ and on $|x|>(1/2)$, $a_M(x)=|x|$ moreover $\partial_xa_M(x)>1/2$ for $x\geq 1/3M$. Let $b_{N,M}$ be an odd, $2/N$-periodic smooth function which is equal to $a_M(2Nx+1)-1$ on $[-1/N,0]$. Define \[\beta(x_1,...,x_n)=\alpha(x_1,...,x_n)+\delta \rho(x_1,...,x_n)(b(x_1-1/2N)+...+b(x_{n-q}-1/2N))\] where $\delta>0$ is small real number and $\rho$ is a cut-off function on $U$ which equals $1$ on a neighborhood of $D$ and $0$ outside a compact subset of $U$.\\

Then for $\delta$ sufficiently small and $N$ and $M$ sufficiently large, $\psi=\hat{\beta}$ is the required function.

{\bf Acknowledgement:} I would like to thank the reviewer for his comments.\\

\end{document}